\theoremstyle{plain}
\newtheorem{theorem}{Theorem}[section]
\newtheorem{proposition}[theorem]{Proposition}
\newtheorem{corollary}[theorem]{Corollary}
\theoremstyle{definition}
\newcommand{\appsection}[1]{\let\oldthesection\thesection
\renewcommand{\thesection}{Appendix \oldthesection}
\section{#1}\let\thesection\oldthesection}
\newtheorem{definition}[theorem]{Definition}
\theoremstyle{remark}
\newtheorem{remark}[theorem]{Remark}
\newtheorem{example}[theorem]{Example}
\def\ZZ{{\mathbb{Z}}}
\def\QQ{{\mathbb{Q}}}
\def\CC{{\mathbb{C}}}
\def\PP{{\mathbb{P}}}
\def\HH{{\mathcal{H}}}
\def\OO{{\mathcal{O}}}
\def\LL{{\mathcal{L}}}
\def\AR{{\mathcal{A}}}
\newcommand{\HK}{\overline{\mathcal{H}}_n}
\begin{document}
\bibliographystyle{amsplain}
\title[Construcci\'on]{Chern slopes of simply connected complex surfaces of general type are dense in [2,3]}
\author{\textrm{Xavier Roulleau and Giancarlo Urz\'ua}}
\date{\today}
\dedicatory{Dedicated to Igor Dolgachev on the occasion of his 70th birthday}

\email{Xavier.Roulleau@math.univ-poitiers.fr, urzua@mat.puc.cl}

\subjclass[2010]{14J29, 57R15, 32J18, 14J25}

\keywords{Surfaces of general type, Geography, Simply connected complex surfaces, Spin 4-manifolds, Arrangements of curves}

\begin{abstract}
We prove that for any number $r \in [2,3]$, there are spin (resp. non-spin and minimal) simply connected complex surfaces of general type $X$ with $c_1^2(X)/c_2(X)$ arbitrarily close to $r$. In particular, this shows the existence of simply connected surfaces of general type arbitrarily close to the Bogomolov-Miyaoka-Yau line. In addition, we prove that for any $r \in [1,3]$ and any integer $q\geq 0$, there are minimal complex surfaces of general type $X$ with $c_1^2(X)/c_2(X)$ arbitrarily close to $r$, and $\pi_1(X)$ isomorphic to the fundamental group of a compact Riemann surface of genus $q$. %A central ingredient is a new family of special arrangements of elliptic curves in the projective plane.
\end{abstract}

\maketitle

%\begin{center} {\small \textit{Dedicated to Igor Dolgachev on the occasion of his 70th birthday}} \end{center}

%\tableofcontents

%----------------------------------------------------------------------------------------------------------------------------------------------
\section{Introduction} \label{intro}

Let $X$ be a minimal nonsingular projective surface of general type over $\CC$. Its Chern numbers $c_1^2(X), c_2(X)$ satisfy $c_1^2(X)>0$, $c_2(X)>0$, the Noether inequality $\frac{1}{5} c_2(X) - \frac{36}{5} \leq c_1^2(X)$, and the Bogomolov-Miyaoka-Yau inequality \cite{Bog79}, \cite{Miy77,Yau77} $$c_1^2(X) \leq 3 c_2(X),$$ where $c_1^2(X)=3c_2(X)$ if and only if the universal cover of $X$ is the complex two dimensional ball \cite{Yau77,Miy82}. The geography problem \cite{Per87} asks for which pair of integers $(a,b)$ there exists $X$ such that $c_1^2(X)=a$ and $c_2(X)=b$. We refer to \cite[VII \S8]{BHPV04} for general existence results. One of them, due to Sommese \cite{So84}, states that every rational point in $[1/5,3]$ occurs as the slope $c_1^2(X)/c_2(X)$ of some (irregular) surface $X$.

Since the early stages of the geography problem, the question was naturally sharpened by imposing the constraint of simply connectedness. Because of the above result of Yau and Miyaoka, a surface $X$ (of general type) satisfying $c_1^2(X)=3c_2(X)$ is not simply connected. \textit{Is this the only additional restriction?} If we divide surfaces according to their index $\frac{1}{3}(c_1^2(X)-2c_2(X))$, then there are satisfactory results for surfaces with negative index due to Persson \cite{Per81}. For instance, all rational slopes between $1/5$ and $2$ are realized as Chern slopes of simply connected surfaces.

Simply connected surfaces with nonnegative index are more difficult to find. In the late $70$s, it was actually conjectured that any simply connected surface of general type has negative index (Bogomolov Watershed Conjecture, see \cite{V78}, \cite{Per87}). In $1984$, Moishezon and Teicher \cite{MT87} gave the first examples with positive index. After this, there were several attempts to fill out this positive index region. One would at least want to fill out the Chern slope interval $[2,3[$ as much as possible. In particular, \textit{are there simply connected surfaces of general type with Chern slope arbitrarily close to $3$?} This is a well known open problem; see for example \cite{Holz78-79}, \cite{Holz81}, \cite{Per87}, \cite{Ch87}, \cite[\S4]{PP93}, \cite{PPX96}, \cite[VII\S8B]{BHPV04}, \cite{U10}, \cite{U11}. The construction of this type of surfaces becomes harder as we approach $3$. The highest known slope was given in \cite{U10}, proving the existence of such surfaces with $c_1^2/c_2$ arbitrarily close to $\frac{71}{26}\approx 2.730769$. Our main theorem is

\begin{theorem}
For any number $r \in [2,3]$, there are spin (resp. non-spin and minimal) simply connected surfaces of general type $X$ with $c_1^2(X)/c_2(X)$ arbitrarily close to $r$.
\end{theorem}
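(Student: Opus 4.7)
The plan is to combine a branched-cover construction over arrangements of curves with Q-Gorenstein smoothings (as suggested by the $\Defq$ operator declared in the preamble), tuned so that varying discrete parameters lets the Chern slopes sweep out a dense subset of $[2,3]$, and so that simple connectedness can be forced at the end by a smoothing argument.

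First, I would establish a flexible family of input data: triples $(Y,\mathcal{A},n)$ where $Y$ is a simply connected surface (for instance $\mathbb{P}^2$, a Hirzebruch surface, or a blow-up thereof), $\mathcal{A}=\{C_1,\dots,C_k\}$ is an arrangement of smooth curves with only nodes and mild higher-order singularities, and $n\geq 2$ is an integer. Associated with such data is a branched abelian ($n$-th root or Kummer-type) cover $X_n\to Y$, resolved minimally. A standard Hirzebruch-type computation expresses $c_1^2(X_n)/c_2(X_n)$ as a ratio of arrangement invariants (self-intersections of components, multiplicities at singular points, Euler characteristic of the complement) plus an error term that vanishes as $n\to\infty$. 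The first technical task is to show that, as $(Y,\mathcal{A})$ varies over an adequately rich catalogue of arrangements and $n\to\infty$, the resulting asymptotic slopes $\sigma(Y,\mathcal{A})$ are dense in $[2,3]$. For slopes near $3$ one would use arrangements that imitate the dual Hesse / Hirzebruch configurations approaching the Bogomolov--Miyaoka--Yau line, while for slopes near $2$ classical examples suffice. Interpolation between the ``high'' and ``low'' slopes will be achieved by adding or removing components and by blow-ups along the arrangement, each of which changes the invariants in an essentially controllable way.

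Second, I would address minimality and the spin versus non-spin dichotomy. The singularities produced by the branched cover are cyclic quotient singularities, some of which are Wahl singularities admitting Q-Gorenstein smoothings. By choosing the cover so that the log-canonical divisor is ample (or big and nef) on $X_n$, one obtains that $X_n$ has a minimal model with the same asymptotic Chern slope. The spin property is controlled by parity of the total branch divisor and the exponents of the cover, and can be engineered by small modifications of $\mathcal{A}$ and of $n$, without disturbing the slope estimate.

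The main obstacle, as in earlier work of the second author, will be killing the fundamental group while preserving a slope arbitrarily close to any prescribed $r\in[2,3]$. The strategy is twofold: use Zariski--Nori type arguments (or an explicit Van Kampen computation on the branch locus) to show that $\pi_1(X_n)$ is generated by loops around the exceptional divisors of the resolution; then perform a Q-Gorenstein smoothing of the Wahl singularities. Each such smoothing contracts precisely those vanishing loops, and if enough Wahl singularities are present (which amounts to a numerical condition on $\mathcal{A}$) the smoothing $X_n^{\text{sm}}$ becomes simply connected. Since $\Defq$ preserves $c_1^2$ and $c_2$, the Chern slope is unchanged. The final proof then assembles these three ingredients, with the density statement following by a pigeon-hole argument after showing that the discrete set of achievable asymptotic slopes is dense in $[2,3]$.
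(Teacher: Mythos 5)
Your proposal captures the outer shell of the paper's strategy (cyclic root covers branched over special arrangements, asymptotic Chern slopes controlled by log Chern invariants) but the central device you propose for killing the fundamental group --- $\QQ$-Gorenstein smoothings of Wahl singularities --- is not what the paper does, and as stated it has a genuine mathematical gap. The paper never deforms anything: the $\Defq$ macro in the preamble is unused. The surfaces $X_n$ are taken as minimal resolutions of normalized root covers, and simple connectedness is proved by an entirely different mechanism. One adds to the branch divisor an arrangement of $8d$ (resp.\ $2d$) \emph{general} lines; after blowing up a general point on one of those lines, the pulled-back cover $X'_n$ acquires a fibration $h \colon X'_n \to \PP^1$ which has sections and at least one simply connected fiber (the one over the line $L_1$, since singularities along it are resolved by chains of smooth rational curves). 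Xiao's theorem \cite{X91} then gives $\pi_1(X'_n)=1$, hence $\pi_1(X_n)=1$.

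The gap in your version is quantitative, not just expository. A $\QQ$-Gorenstein smoothing of a Wahl singularity $\tfrac{1}{n^2}(1,na-1)$ replaces a neighborhood of the exceptional chain by the Milnor fiber, which is a rational homology ball with $\pi_1 \cong \ZZ/n$ --- it does \emph{not} contract the ``vanishing loops.'' So even after a Zariski--Nori/Van Kampen reduction showing $\pi_1(X_n)$ is normally generated by loops around the exceptional divisors, the surgery formula for the smoothing leaves finite cyclic contributions that must be explicitly killed, and nothing in your argument forces them to die. This is precisely the difficulty that the general-lines/fibration trick in the paper sidesteps.

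Two further points where your sketch would not close. First, density: the paper does not interpolate across a catalogue of $(Y,\mathcal{A})$. It builds a single family on a fixed surface $H$ (the blow-up of $\PP^2$ at the $12$ triple points of the dual Hesse arrangement), obtained from Hirzebruch's elliptic arrangements on $T\times T$ by a $\ZZ/3$ quotient, supplemented by $\beta^2 p^2$ general fibers of each of the four pencils. The free parameter $x=\alpha/\beta$ then produces, in the limit $p\to\infty$, a continuous function of $x$ whose image is an interval containing $[2,3]$, and density is automatic. Second, the spin property is not a ``small modification'' issue: it requires a specific choice of branch multiplicities (the $a_i,b_i$ taking values $1$ and $2p-1$) together with a direct verification that the local discrepancy contributions $\Delta$ and $\bar\Delta$ over both singularity types are even, which is the content of Proposition \ref{spin}. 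Your parity heuristic would need to be replaced by exactly this kind of computation.
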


In particular, this shows that $3$ is indeed the sharp upper bound for slopes of both spin and non-spin simply connected surfaces. Let us recall that a simply connected surface is called \textit{spin} if its canonical class is $2$-divisible. For the topological interest in the existence of spin surfaces we refer to \cite[\S4]{PP93},\cite{PPX96}.

A key ingredient in the construction of the surfaces $X$ is the existence of a family of special arrangements of elliptic curves $\mathcal{H}'_n$ in the blow-up $H$ of $\PP^2$ at the $12$ triple points of the dual Hesse arrangement of lines. These arrangements are related to special arrangements of elliptic curves in $\CC/\ZZ[\zeta] \times \CC/\ZZ[\zeta]$ where $\zeta=e^{2 \pi i/6}$. The latter were discovered by Hirzebruch in \cite{Hirz84}. They are connected to open ball quotients (see \cite{KH05} for more on this type of special arrangements). The arrangements $\mathcal{H}'_n$ in $H$ have many simple $4$-fold points as singularities (we later call them $4$-points), and a particular divisibility property in Pic$(H)$. This allows us to consider certain cyclic covers \cite{EV92} which are branched along the arrangement, together with other curves, but ``avoid" the exceptional divisors from all the $4$-points. We make use of the more detailed description of cyclic covers developed in \cite{U08,U10}, which in particular makes explicit the use of log invariants, classical Dedekind sums, and lengths of Hirzebruch-Jung continued fractions, to estimate the Chern slope. To demonstrate that these surfaces are actually simply connected, we show that certain blow-ups of the surfaces $X$ admit a fibration to $\PP^1$, which has sections and a simply connected fiber; we then use a method from \cite{X91}. The spin issue is more delicate, among other things it involves a choice of specific multiplicities for the curves in the branch locus.

The construction has many degrees of freedom due to the properties of the key arrangements of elliptic curves. For example, the surfaces we construct in the previous theorem admit many deformations. For this, we add a general arrangement of $d$ lines to the branch locus, where $d$ is essentially independent of the parameters used in the construction. In addition, the construction allows us the use of a specific base change, over the fibration to $\PP^1$ mentioned above, to prove the following.

\begin{corollary}
Let $q>0$ be an integer. Then, for any number $r \in [1,3]$, there are minimal surfaces of general type $X$ with $c_1^2(X)/c_2(X)$ arbitrarily close to $r$, and $\pi_1(X)$ isomorphic to the fundamental group of a compact Riemann surface of genus $q$.
\end{corollary}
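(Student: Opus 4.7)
The plan is to deduce the corollary from Theorem 1.1 by a base change on the fibration $f:\tilde X\to \PP^1$ used in its proof. For any target $r_0\in[2,3]$ that construction produces a simply connected surface of general type $X$ of slope close to $r_0$, together with a birational modification $\tilde X\to X$ on which there is a fibration $f$ with at least one section and at least one simply connected fiber; let $g_F$ denote the arithmetic genus of a general fiber of $f$.

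Given $q\geq 1$, for a large degree $n$ I would choose a finite map $\phi:C_q\to \PP^1$ with $C_q$ a smooth curve of genus $q$ and with branch divisor supported in the set of regular values of $f$; such $\phi$ exists by Riemann--Hurwitz since $f$ has only finitely many singular fibers. Form $\tilde X_q:=\tilde X\times_{\PP^1}C_q$ with its induced fibration $f_q:\tilde X_q\to C_q$. Because $\phi$ is \'etale over all singular values of $f$, the surface $\tilde X_q$ is smooth; the pull-backs of the sections of $f$ are sections of $f_q$, and at least one fiber of $f_q$ is a copy of the simply connected fiber of $f$. Applying the argument of \cite{X91} to $f_q$, exactly as it was applied to $f$ in Theorem 1.1 but with base $C_q$ in place of $\PP^1$, yields $\pi_1(\tilde X_q)\cong \pi_1(C_q)$; passing to a minimal model $X_q$ of $\tilde X_q$ preserves this group.

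A standard computation for base change over regular values of a fibration yields
\[
c_1^2(\tilde X_q)=n\,c_1^2(\tilde X)+8(g_F-1)(n+q-1),\qquad c_2(\tilde X_q)=n\,c_2(\tilde X)+4(g_F-1)(n+q-1),
\]
so that the slope of $\tilde X_q$ equals $(\alpha+2\beta)/(1+\beta)$ with $\alpha=c_1^2(\tilde X)/c_2(\tilde X)$ close to $r_0$ and $\beta=4(g_F-1)(n+q-1)/(n\,c_2(\tilde X))\geq 0$ tunable through $n$. The blow-down $\tilde X\to X$ involves a fixed (independent of $n$) number of $(-1)$-curves, each pulling back to $n$ parallel $(-1)$-curves in $\tilde X_q$ whose contraction exactly replaces $c_i(\tilde X)$ by $c_i(X)$ in the formulas above. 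Hence, for any $r\in[2,3]$, selecting $r_0\in[r,3]$ and $n$ so that $(\alpha+2\beta)/(1+\beta)$ lies within $\varepsilon$ of $r$ delivers a minimal $X_q$ with slope arbitrarily close to $r$.

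The main obstacle is the range $r\in[1,2)$, which the above base change cannot reach because slopes always converge to $2$ from above. To cover it I would use the built-in flexibility of the construction of Theorem 1.1: by modifying the cyclic-cover weights and enlarging the branch locus with the general line arrangement introduced in the deformation discussion, one produces input surfaces still equipped with the fibration $f$ with a section and a simply connected fiber, but of slope $\alpha<2$ (at the cost of losing simple connectedness of $X$, which is irrelevant here). The same base change then drives their slopes toward $2$ from below, so tuning $n$ realizes any $r\in[1,2)$. Combined, the two cases produce, for every $r\in[1,3]$ and every $q\geq 1$, a minimal surface of general type $X_q$ with $c_1^2(X_q)/c_2(X_q)$ arbitrarily close to $r$ and $\pi_1(X_q)\cong \pi_1(C_q)$, the fundamental group of a compact Riemann surface of genus $q$.
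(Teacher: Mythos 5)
Your overall strategy—base change of the fibration used to establish simple connectedness, then invoking \cite{X91} to compute $\pi_1$—is exactly the route the paper takes, and your slope formula $(\alpha+2\beta)/(1+\beta)$ with $\beta=4(g_F-1)(n+q-1)/(n\,c_2(\tilde X))$ is correct. However, there are two substantive problems.

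First, you deduce the corollary from Theorem 1.1, which only provides slopes in $[2,3]$, and then try to reach $[1,2)$ by vaguely ``modifying the cyclic-cover weights.'' That gap is precisely the content of the paper's Section 6, where the construction is rerun as a $p$-th root cover (rather than $4p$-th) with redesigned weights so that $\lim_{p\to\infty}c_1^2/c_2=\lambda(x)$ has image $[1,3]$; these are the non-spin, simply connected, minimal surfaces of Theorem 6.4. The paper's proof of the corollary is then a base change applied to those $X_p$, so the input surfaces already cover the whole interval and no workaround is needed. Without carrying out that redesign, your sketch for $r\in[1,2)$ is not a proof.

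Second, your asymptotic analysis is misdirected. You propose to ``tune $\beta$ through $n$,'' but for fixed $\tilde X$ the quantity $\beta=4(g_F-1)(n+q-1)/(n\,c_2(\tilde X))$ is bounded as $n\to\infty$ (it converges to $4(g_F-1)/c_2(\tilde X)$), so letting the cover degree grow does not let you steer the slope across an interval, and it certainly does not drive it to $2$. The paper's argument runs in the opposite direction: it \emph{fixes} the cover $\rho\colon C_q\to\PP^1$ to be a cyclic $(q+1)$-cover branched at four general points (so $g(C_q)=q$ by Riemann--Hurwitz), and then lets $p\to\infty$. Since $g_F-1$ grows like $p^3$ while $c_1^2(X_p)$ and $c_2(X_p)$ grow like $p^5$, the correction terms $16q(g_F-1)$ and $8q(g_F-1)$ in
$$\frac{c_1^2(X)}{c_2(X)}=\frac{(q+1)(c_1^2(X_p)-p)+16q(g_F-1)}{(q+1)(c_2(X_p)+p)+8q(g_F-1)}$$
are asymptotically negligible, and the slope of the base-changed surface converges to $\lambda(\alpha/\beta)$. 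You should also be more careful with minimality: replacing ``pass to a minimal model'' by a direct verification is necessary because an uncontrolled contraction of $(-1)$-curves could in principle move the slope; the paper proves minimality of the fiber product $X$ itself by writing $K_X\equiv\rho'^*\big(\psi'^*(K_{X_p})+\sum_i i S_i+\tfrac{4q}{q+1}F\big)$ and checking there are no $(-1)$-curves in the support.
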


The following is an overall outline of the construction of the surfaces $X$ in the above theorem, in connection to the sections of the paper. Let us fix $r \in [2,3]$, and $\epsilon >0$. We construct the surfaces $X(=X_n)$ via morphisms $f \colon X_n \to Y_n$, such that there exists a function $\lambda \colon \QQ_{>0} \to \QQ$ and a rational number $x$ satisfying $|\lambda(x)-r| < \epsilon$ and $\text{lim}_{n \to \infty} \ c_1^2(X_n)/c_2(X_n) = \lambda(x)$. To achieve this, it is crucial to have a special branch divisor $A \subset Y_n$ for the morphisms $f$. The surfaces $Y_n$ are built through blow-ups at specific points of the surface $H$ explained above. The key part of $A$ comes from the arrangements of elliptic curves $\mathcal{H}'_n$ in $H$. Roughly speaking, these arrangements are essentially defined as images of the Hirzebruch's arrangements $\mathcal{H}_n$ in \cite{Hirz84}, by means of a cyclic quotient of a blow-up of $\CC/\ZZ[\zeta] \times \CC/\ZZ[\zeta]$. In \S2 we review the construction of the arrangements $\mathcal{H}_n$. In \S3 we give the quotient construction of $H$, and we define the arrangements $\mathcal{H}'_n$. In \S4 we compute their log Chern numbers. These log invariants are central for our asymptotic results. In \S5 we prove our main result for spin surfaces. There, we also discuss all the morphisms involved, the branch divisors $A$, various divisibilities in Picard groups, formulas for Chern invariants, and arithmetic numbers which are relevant in these formulas. Using particular curves in $A$, we also prove in \S5 that $X$ is simply connected. In \S6, by applying the previous work in a similar set up, we prove our main result for non-spin minimal surfaces. We point out that the functions $\lambda$ have as image either $]1.375,3[$ for spin surfaces or $]1,3[$ for non-spin surfaces, and so we indeed obtain density for the closure of these two intervals. At the end of \S6, we use the non-spin surfaces to prove the above corollary via particular base changes.

%In \S2, we review the construction of the arrangements $\mathcal{H}_n$ of elliptic curves due to Hirzebruch \cite{Hirz84}. In \S3 we present the key arrangements of elliptic curves $\mathcal{H}'_n$. In \S4 we compute their log Chern numbers. These log invariants are central for our results. In \S5 we prove our main result for spin surfaces. In that section we also discuss all the morphisms, divisibilities, arithmetic numbers, and formulas for Chern invariants which are used in the constructions. In \S6 we prove our main result for non-spin minimal surfaces, and the corollary above.

\subsection*{Conventions}
For a nonsingular surface $S$, $K_S$ denotes a canonical divisor of $S$. An arrangement of curves is a collection of curves $\{C_1, \ldots, C_r \}$ on a surface. We loosely consider it as either a set, a divisor $C_1+ \ldots+C_r$, or a curve $\bigcup_{i=1}^r C_i$. A $k$-point of an arrangement is a point of it locally of the form $(0,0) \in \{(x-a_1 y) \cdots (x- a_k y)=0 \} \subset \CC_{x,y}^2$ for some $a_i \neq a_j$. For an invertible sheaf $\LL$, we write $\LL^{r}:= \LL^{\otimes r}$. The genus of a compact Riemann surface $C$ is denoted by $g(C)$. The (topological) fundamental group of $Z$ is denoted by $\pi_1(Z)$.

\subsection*{Acknowledgements}
We would like to thank Igor Dolgachev for helpful discussions, and Fabrizio Catanese for pointing out that Proposition \ref{dualHesse} is a classical fact due to A. Comessatti (see \cite[\S 2]{CC93}). The main results of the present article were found while the first author was visiting Pontificia Universidad Cat\'olica de Chile under an invitation from the second. Xavier Roulleau beneficed from a sabbatical semester d\'el\'egation CNRS. Giancarlo Urz\'ua was supported by the FONDECYT Inicio grant 11110047 funded by the Chilean Government.

%----------------------------------------------------------------------------------------------------------------------------------------------
\section{Hirzebruch elliptic arrangements} \label{hirz}

Let $\zeta=e^{2\pi i/6}$, and let $T$ be the elliptic curve $$T=\CC/\ZZ[\zeta],$$ where $\ZZ[\zeta]=\{ a+ \zeta b \,|\, a,b \in \ZZ \}$ are the Eisenstein integers. In \cite{Hirz84}, Hirzebruch constructs special arrangements of elliptic curves in $T\times T$. We recall his construction, trying not to deviate much from his notation. Write $(z,w)$ for points in $T\times T$. For each $n\in\ZZ[\zeta]$, let us denote by $U_{n}$ the group of $n$-torsion points $$ U_{n}=\{(z,w)\in T\times T \,|\, nz=nw=0\}.$$ The group $U_{n}$ has order $(n\bar{n})^{2}$, where $\bar{n}$ denotes the complex conjugation of $n$. (In \cite{Hirz84}, $n$ is an integer.)

Let us consider the following four elliptic curves $$T_{0}=\{z=0\},\, T_{1}=\{z=w\},\, T_{\infty}=\{w=0\},\, T_{\zeta}=\{z=\zeta w\}.$$ These four curves pass trough $(0,0)$, and do \textit{not} intersect anywhere else. The group $U_{n}$ acts on $T\times T$ by translation. The stabilizer of each of the
four curves $T_{i}$ in $U_n$ has order $n\bar{n}$. Therefore the orbit $D_{i}=U_{n}(T_{i})$
of $T_{i}$ ($i=0,1,\zeta,\infty$) consists of $n\bar{n}$ disjoint nonsingular
elliptic curves, all being fibers of the fibrations $\pi_{i} \colon T\times T\to T$ defined as $$\pi_{0}(z,w)=z,  \ \pi_1(z,w)=z-w, \ \pi_{\infty}(z,w)=w, \ \pi_{\zeta}(z,w)=z-\zeta w.$$

The arrangement $$\mathcal{H}_{n}:= D_{0}+D_{1}+D_{\zeta}+D_{\infty}$$ has $4n\bar{n}$ elliptic curves. The singularities of $\mathcal{H}_{n}$ are precisely $(n\bar{n})^{2}$ $4$-points.

For example, let us take $U_{1-\zeta^{2}}$, which is the group of smallest order among all non-trivial groups $U_{n}$ with $n\in\ZZ [\zeta]$. The arrangement $\mathcal{H}_{1-\zeta^{2}}$ consists of $12$ elliptic curves, each containing $3$ points of $U_{1-\zeta^{2}}$, and nine $4$-points; it gives a Hesse type configuration $(12_{3},9_{4})$.

For $n\in\ZZ$, Holzapfel proves in \cite{Holz86} that the complement in the blow-up at the $n^4$ $4$-points of the strict transform of the $\mathcal{H}_{n}$ is a ball quotient by an arithmetic group. Holzapfel also considered other similar special elliptic arrangements, for example using the Gaussian integers; cf. \cite{KH05}. They would also work in our upcoming constructions.

%----------------------------------------------------------------------------------------------------------------------------------------------
\section{Dual Hesse and elliptic arrangements} \label{arrang}

The endomorphism $[\zeta^{2}]$ of $T\times T$ given by $$[\zeta^{2}](z,w)=(\zeta^{2}z,\zeta^{2}w)$$ (where $\zeta=e^{2i\pi/6}$) is an order $3$ automorphism of $T\times T$. Its fixed point set
is the group of $9$ elements $U_{1-\zeta^{2}}\subset U_{3}$.

Let $\sigma \colon B \to T \times T$ be the blow-up at the $9$ points in $U_{1-\zeta^2}$.
The strict transform of the $T_{i}$ is denoted again by $T_i$. Thus
$T_{i}^{2}=-3$ in $B$. The automorphism $[\zeta^2]$ acts on $B$, and
it is the identity on each of the $9$ exceptional curves of $\sigma$. Let $H=B/ \langle [\zeta^2] \rangle$, and let $\pi \colon B \to H$
be the quotient map. For each $i$, we have the commutative diagram
$$ \xymatrix{ B \ar[r]^{\pi} \ar[d]^{\pi_i \circ \sigma}  & H \ar[d]^{\pi'_i} \\
   T \ar[r] & \PP^1} $$ where the horizontal arrows are quotients by $\ZZ / 3 \ZZ$, and the $\pi'_i$ is the induced elliptic fibration on $H$.

\begin{proposition}
The surface $H$ is the blow-up $\tau \colon H \to \PP^2$ at the twelve $3$-points of the dual Hesse arrangement $$(x^3 - y^3)(x^3 - z^3)(y^3 - z^3)=0.$$ The image of the strict transform of $\mathcal{H}_{1-\zeta^{2}}$ by $\sigma$ is the exceptional divisor of $\tau$. The image of the nine $(-1)$-curves in $B$ is the strict transform under $\tau$ of the nine lines in the dual Hesse arrangement.
\label{dualHesse}
\end{proposition}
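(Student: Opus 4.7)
The plan is to first contract 12 natural $(-1)$-curves in $H$ to obtain a smooth surface $S$, prove $S\cong\PP^2$, and then match the 9 remaining $(-3)$-curves with the 9 lines of the dual Hesse arrangement passing through 12 specified triple points.

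First I would analyse the action of $[\zeta^2]$ on $B$. At each of the 9 fixed points on $T\times T$ the differential of $[\zeta^2]$ has both eigenvalues equal to $\zeta^2$, hence acts trivially on tangent directions. Therefore each exceptional divisor $E_i\subset B$ of $\sigma$ is fixed pointwise, while each strict transform $\tilde T\subset B$ of a curve in $\mathcal{H}_{1-\zeta^2}$ is $[\zeta^2]$-invariant with 3 fixed points (its intersections with the $E_i$'s lying over the 3 fixed points through which $T$ passes). Using $\pi^{*}\pi(E_i)=3E_i$ (ramified along $E_i$) and $\pi^{*}\pi(\tilde T)=\tilde T$ (étale near $\tilde T$), the projection formula yields: in $H$ the nine $\pi(E_i)$ are pairwise disjoint $(-3)$-curves, the twelve $\pi(\tilde T)$ are pairwise disjoint smooth rational $(-1)$-curves, each $\pi(\tilde T)$ meets exactly 3 of the $\pi(E_i)$'s transversally, and $K_H=-\tfrac{1}{3}\sum_i\pi(E_i)$ with $K_H^2=-3$.

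Next, I would contract the 12 disjoint $(-1)$-curves $\pi(\tilde T)$ via $\phi\colon H\to S$, producing a smooth surface with $K_S^2=9$ on which each $L_i:=\phi(\pi(E_i))$ is a smooth rational curve with $L_i^2=1$. The count $4\cdot 3\cdot\binom{3}{2}=36=\binom{9}{2}$ shows that every pair of fixed points in $U_{1-\zeta^2}$ lies on a unique curve of $\mathcal{H}_{1-\zeta^2}$, so the contraction joins each pair $L_i,L_j$ at a single new point, giving $L_i\cdot L_j=1$ for $i\neq j$. Then $-K_S=\tfrac{1}{3}\sum L_i$ is nef and big, and $\chi(\mathcal{O}_S)=1$ by Noether. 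Hodge's index theorem applied to $[L_i]-[L_j]$ — zero self-intersection, orthogonal to the big-and-nef class $\sum L_k=-3K_S$ — forces $[L_i]=[L_j]=L$ in $\mathrm{NS}(S)$. Riemann–Roch and the anti-effectivity of $K_S-L$ give $h^0(L)=3$, and $|L|$ is base-point free: the sequence $0\to\mathcal{O}_S\to\mathcal{O}_S(L_1)\to\mathcal{O}_{L_1}(L_1)\to 0$ (with $h^1(\mathcal{O}_S)=0$) rules out base points on $L_1$, while a base point off $L_1$ would have to lie on all 9 of the $L_i$, contradicting that no more than 3 of them are concurrent at any point of $S$. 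Since $L^2=1$, the morphism $\phi_{|L|}\colon S\to\PP^2$ is birational, and $K_S^2=9$ leaves no room for contractions, so $S\cong\PP^2$.

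Under this isomorphism the 9 curves $L_i$ become 9 lines meeting pairwise in 12 triple points; by the classical theorem of Comessatti (cf.\ \cite[\S2]{CC93}), any such $(9_4,12_3)$ line configuration in $\PP^2_{\CC}$ is projectively equivalent to the dual Hesse arrangement $(x^3-y^3)(x^3-z^3)(y^3-z^3)=0$. Composing $\phi$ with this projective isomorphism yields $\tau\colon H\to\PP^2$, whose 12 exceptional divisors are exactly the curves $\pi(\tilde T)$ (the images of the strict transforms of $\mathcal{H}_{1-\zeta^2}$) and whose strict transforms of the 9 dual Hesse lines are exactly the curves $\pi(E_i)$ (the images of the nine exceptional $(-1)$-curves in $B$), proving all three assertions. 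The main obstacle I anticipate is this final projective identification with the specific dual Hesse arrangement, which is the classical Comessatti point; in a fully detailed argument one would pin it down by tracking the residual $(\ZZ/3)^2$-translation symmetry descending to $H$, which realizes the Hessian symmetry group of the configuration.
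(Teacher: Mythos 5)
Your argument is correct in its broad outline and takes a genuinely different route from the paper. The paper identifies the blown-down surface $S$ topologically: it shows $H$ is simply connected (using the elliptic fibration $\pi'_i$ with its sections and tree-fibers plus Xiao's result), computes $e(H)=15$ and $K_H^2=-3$, and then $S$ is simply connected with $e(S)=3$, $K_S^2=9$, hence $\PP^2$. You instead avoid $e(H)$ and simple connectedness and argue algebro-geometrically via Hodge index, Riemann--Roch, and base-point-freeness of $|L|$ to produce the birational morphism to $\PP^2$. Both routes are legitimate; yours is more in line with classical adjunction-theoretic surface arguments, while the paper's is shorter because it exploits the fibration structure that it needs anyway later.

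That said, two steps in your version need repair. First, ``$\chi(\mathcal{O}_S)=1$ by Noether'' is not justified: Noether's formula requires $e(S)$ (or equivalently $c_2$), which you have not computed. The cleanest fix that stays within your framework is to note $q(H)=h^0(\Omega^1_B)^{[\zeta^2]}=h^0(\Omega^1_{T\times T})^{[\zeta^2]}=0$ and $p_g(H)=h^0(K_B)^{[\zeta^2]}=0$ (because $[\zeta^2]$ acts on $dz,\,dw$ and on $dz\wedge dw$ by the nontrivial characters $\zeta^2$ and $\zeta^4$), giving $\chi(\mathcal{O}_H)=\chi(\mathcal{O}_S)=1$ directly. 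Second, the Hodge index theorem gives only numerical equivalence $L_i\equiv L_j$; passing to equality ``in $\mathrm{NS}(S)$,'' and a fortiori to linear equivalence for the Riemann--Roch step, needs $\mathrm{Pic}(S)$ torsion-free, which is essentially simple connectedness of $S$ --- exactly what the paper supplies and you bypass. You can sidestep this: with $q(S)=0$ and $h^2(L_1)=0$ one already gets $h^0(L_1)\geq 3$ from Riemann--Roch for $L_1$ alone (no need to relate the $L_i$), and base-point-freeness of $|L_1|$ follows from the restriction sequence $0\to\mathcal{O}_S\to\mathcal{O}_S(L_1)\to\mathcal{O}_{L_1}(L_1)\simeq\mathcal{O}_{\PP^1}(1)\to 0$ together with $q(S)=0$ (a putative base point off $L_1$ is ruled out because $L_1\in|L_1|$; one on $L_1$ is ruled out because $|\mathcal{O}_{\PP^1}(1)|$ is base-point-free). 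Then degree $L_1^2=1$ gives a birational morphism to $\PP^2$, and $K_S^2=9$ forces an isomorphism, after which torsion-freeness and $L_i\sim L_j$ come for free. With these two adjustments the argument is sound and independent of the paper's use of Xiao's theorem and the Euler-number computation.
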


\begin{proof}
For any $i$, the elliptic fibration $\pi'_i \colon H \to \PP^1$ has $3$ trees of $4$ $\PP^1$'s and nine sections from the image of the curves in $\mathcal{H}_{1-\zeta^{2}}$. This implies that $H$ is simply connected; cf. \cite{X91}. Moreover, if $C$ is the exceptional divisor of $\sigma$, then $\pi^*(K_H) + 2C=K_B=C$, and so $\pi^*(K_H)=-C$ and $K_H^2=-3$. If $e(X)$ is the topological Euler characteristic of $X$, then we also have $e(H)=\frac{1}{3}e(B)+\frac{2}{3}e(C)=15$. The image of $C$ under $\pi$ are nine disjoint $(-3)$-curves, and the image of the twelve elliptic curves in the strict transform of $\mathcal{H}_{1-\zeta^{2}}$ is a divisor in $H$ formed by twelve disjoint $(-1)$-curves. After we blow-down these $12$ curves, we obtain $\PP^2$, and the nine $(-3)$-curves become an arrangement of $9$ lines with twelve $3$-points. It is well-known that, up to a projective equivalence, this is the dual Hesse arrangement; for example see \cite[Thm.7.2]{U10}.
\end{proof}

We now define the arrangements of elliptic curves in $H$ which are going to be key in the next sections. Let us first to come back to $T \times T$ and the automorphism $[\zeta^2]$. For each $i \in \{0,1,\zeta,\infty \}$, the curve $T_{i}$ is stabilized by $[\zeta^{2}]$, and contains $3$ fixed points including $0$. These points are in $U_{1-\zeta^{2}}$, therefore the orbit $U_{1-\zeta^{2}}(T_{i})$ consists of $3$ elliptic curves. For $n \in \ZZ[\zeta]$, we recall that $D_{i}$ is a union of $n\bar{n}$ disjoint elliptic curves which are fibers of $\pi_{i}$. Let us denote again by $D_{i}$ the strict transform in $B$ of $D_{i}$. We have two types of elliptic arrangements in $H$ according to the $(1-\zeta^2)$-divisibility of $n$.

\textit{Suppose that $1-\zeta^{2}$ divides $n$}. In this paper we consider only this case. For each $i\in\{0,1,\zeta,\infty\}$, it is immediate to verify that $[\zeta^{2}]$ acts on the divisor $D_{i}$, stabilizing the three curves in the orbit $U_{1-\zeta^{2}}(T_{i})$ and permuting the other $n\bar{n}-3$ curves.

Let us consider the situation in $B$. Each curve in $U_{1-\zeta^{2}}(T_{i})$ cuts $3$ $(-1)$-curves, the other components of $D_{i}$ are disjoint
from $(-1)$-curves. As we have said, the $(-1)$-curves are pointwise fixed
by $[\zeta^{2}]$. The image of $U_{1-\zeta^{2}}(T_{i})$ under $\pi$
is formed by three $(-1)$-curves. The other $(n\bar{n}-3)/3$ disjoint elliptic
curves in the image of $D_{i}$ are fibers of $\pi'_{i}$. We denote
them by $\mathcal{E}_{i}$. Thus we define in $H$ the following arrangement $$\mathcal{H}'_n := \mathcal{E}_{0} + \mathcal{E}_{1} + \mathcal{E}_{\infty} + \mathcal{E}_{\zeta}.$$

On $T\times T$, the arrangement $\mathcal{H}_{n}$ has $(n\bar{n})^{2}$
$4$-points, which are the $n$-torsion points. Since $U_{1-\zeta^{2}}\cap U_{n}=U_{1-\zeta^{2}}$,
the arrangement in $B$ contains $((n\bar{n})^{2}-9)$ $4$-points,
none of them lying on the ramification locus of $\pi$. Therefore, $\mathcal{H}'_{n}$
consists of $4(n\bar{n}-3)/3$ elliptic curves, and its singularities
are precisely $4(n\bar{n}-3)$ $3$-points and $(n\bar{n}-3)(n\bar{n}-9)/3$
$4$-points. Notice that for any $i\neq j$, if $\Gamma_{i}$ and $\Gamma_{j}$
are elliptic curves in $\mathcal{E}_{i}$ and $\mathcal{E}_{j}$ respectively,
then they intersect transversally at $3$ points.

%----------------------------------------------------------------------------------------------------------------------------------------------
\section{Log Chern numbers} \label{log}

In this section we recall some basic definitions and facts around log differentials; cf. \cite{EV92}, \cite[\S2]{U10}. We then compute the log Chern invariants for the arrangements $\mathcal{H}'_n$, and for certain modifications $\HK$. This computation shows the importance of $\HK$ for the upcoming constructions.

Let $Y$ be a nonsingular projective surface. Let $A$ be a simple normal crossing divisor in $Y$. This means, $A$ is an arrangement of nonsingular curves in $Y$, and its singularities are only nodes (i.e. $2$-points). The sheaf of log differentials along $A$, denoted by $\Omega_Y^1(\log A)$, is the $\OO_Y$-submodule of $\Omega_Y^1 \otimes \OO_Y(A)$ satisfying
\begin{itemize}
\item[(1)] $\Omega_Y^1(\log A)|_{Y\setminus A}=\Omega_{Y\setminus A}^1$. \item[(2)] At any point $P$ in $A$, we have $\omega_P \in
\Omega_Y^1(\log A)_P$ if and only if $\omega_P = \sum_{i=1}^m a_i \frac{dy_i}{y_i} + \sum_{j=m+1}^{2} b_j dy_j$, where $(y_1,y_2)$ is a local
system around $P$ for $Y$, and $\{ y_1\cdots y_m=0 \}$ defines $A$ around $P$ (and so it is either $\{y_1=0\}$ or $\{y_1 y_2=0 \}$).
\end{itemize}

Hence, $\Omega_Y^1(\log A)$ is a locally free sheaf of rank two. Notice that $\bigwedge^2 \Omega_Y^1(\log A) \simeq \OO_Y(K_Y + A)$. In analogy to the Chern invariants of a nonsingular projective surface, the log Chern invariants of the pair $(Y,A)$ are defined as $\bar{c}_i(Y,A):= c_i({\Omega_Y^1(\log A)}^{\vee})$ for $i= 1,2$, where ${\Omega_Y^1(\log A)}^{\vee}$ is the dual sheaf of $\Omega_Y^1(\log A)$.

The log Chern numbers of $(Y,A)$ are \begin{center} $\bar{c}_1^2(Y,A):= c_1 \big({\Omega_Y^1(\log A)}^{\vee} \big)^2 \ \ \ \
\text{and} \ \ \ \ \bar{c}_2(Y,A):= c_2 \big({\Omega_Y^1(\log A)}^{\vee} \big).$ \end{center}Hence for $A=0$ we recover the Chern numbers of $Y$, which are denoted by $c_1^2(Y)$ and $c_2(Y)$. Sometimes we may drop $Y$ or $(Y,A)$ if the context is understood.

An arrangement of nonsingular curves $\AR$ in a nonsingular projective surface $Z$ is called \textit{simple crossings} if it has only $k$-points as singularities. The model example are line arrangements in $\PP^2$. The log Chern numbers of the pair $(Z,\AR)$ are defined as the log Chern numbers of the pair $(Y,A)$, where $Y \to Z$ is the blow-up of $Z$ at all $k$-points of $\AR$ with $k>2$, and $A$ is the (reduced) total transform of $\AR$.

\begin{proposition} \cite[Prop.4.6]{U10}
Let $\AR= \{C_1,\ldots, C_d \}$ be a simple crossing arrangement in $Z$. Let $t_k$ be the number of $k$-points in $\AR$. Then, $$\bar{c}_1^2(Z,\AR) = c_1^2(Z) - \sum_{i=1}^d C_i^2 + \sum_{k\geq
2} (3k-4)t_k + 4 \sum_{i=1}^d (g(C_i)-1),$$ and $\bar{c}_2(Z,\AR) = c_2(Z) + \sum_{k\geq 2} (k-1)t_k + 2 \sum_{i=1}^d (g(C_i)-1)$.
\label{logchern}
\end{proposition}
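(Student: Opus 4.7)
The plan is to first establish the result for a simple normal crossing (SNC) pair $(Y,A)$ via the residue sequence for log differentials, and then translate back to $(Z,\AR)$ through the blow-up $\sigma \colon Y \to Z$ at the $k$-points of $\AR$ with $k>2$. In this blow-up, write $A = \sum_{i=1}^d C_i' + \sum_p E_p$, the sum of the strict transforms of the $C_i$ and the exceptional $(-1)$-curves $E_p$.

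For the SNC pair, I would apply the residue sequence
\[
0 \to \Omega_Y^1 \to \Omega_Y^1(\log A) \to \bigoplus_j \OO_{A_j} \to 0,
\]
using $c(\OO_{A_j}) = 1/(1-A_j)$ (which comes from $0 \to \OO_Y(-A_j) \to \OO_Y \to \OO_{A_j} \to 0$). Multiplicativity of total Chern classes then yields
\[
c_1(\Omega_Y^1(\log A)) = K_Y + A, \quad c_2(\Omega_Y^1(\log A)) = c_2(Y) + K_Y \cdot A + \sum_j A_j^2 + \sum_{i<j} A_i \cdot A_j.
\]
Dualizing the rank-2 bundle (so $c_1$ changes sign, $c_2$ is preserved), combined with adjunction $K_Y \cdot A_j + A_j^2 = 2g(A_j)-2$ on each smooth component and the identity $\sum_{i<j} A_i \cdot A_j = s$, where $s$ denotes the number of nodes of $A$, gives the intermediate formulas
\[
\bar{c}_2(Y,A) = c_2(Y) + 2\sum_j (g(A_j)-1) + s, \qquad \bar{c}_1^2(Y,A) = K_Y^2 - \sum_j A_j^2 + 4\sum_j (g(A_j)-1) + 2s.
\]

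Next I would pull these back to $Z$ and $\AR$ using standard blow-up identities. Set $N = \sum_{k>2} t_k$, the number of blown-up points. Then $K_Y^2 = c_1^2(Z) - N$ and $c_2(Y) = c_2(Z) + N$; on the curves, $g(C_i') = g(C_i)$, $g(E_p) = 0$, $E_p^2 = -1$, and $C_i'^2 = C_i^2 - \#\{k\text{-points on } C_i, \, k>2\}$. Since every $k$-point with $k>2$ lies on exactly $k$ curves of $\AR$, summing gives $\sum_j A_j^2 = \sum_i C_i^2 - \sum_{k>2}(k+1)t_k$. The nodes of $A$ are the original $2$-points of $\AR$ plus $k$ transverse crossings along each $E_p$, so $s = t_2 + \sum_{k>2} k\, t_k$; and $\sum_j (g(A_j)-1) = \sum_i (g(C_i)-1) - N$.

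The proof is then completed by substituting these four identities into the two SNC formulas and collecting coefficients. The only delicate step is this last bookkeeping: one must verify that the exceptional contributions from the $E_p$ to $K_Y^2$, $c_2(Y)$, $\sum_j A_j^2$, $\sum_j (g(A_j)-1)$, and $s$ combine so that the coefficient of $t_k$ for $k>2$ is exactly $(3k-4)$ in $\bar{c}_1^2$ and $(k-1)$ in $\bar{c}_2$, matching the evaluations at $k=2$ of those same polynomials. Once this alignment is checked, the separate $t_2$ and $t_{k>2}$ contributions merge into the single sums $\sum_{k \geq 2}(3k-4)t_k$ and $\sum_{k \geq 2}(k-1)t_k$ claimed in the proposition.
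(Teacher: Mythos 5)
Your proof is correct and complete. The paper itself does not prove this proposition---it cites \cite[Prop.4.6]{U10}---but your argument via the residue sequence for $\Omega_Y^1(\log A)$ combined with adjunction and the blow-up bookkeeping is exactly the standard route, and it is the one taken in the cited reference. All four translation identities ($K_Y^2 = c_1^2(Z)-N$, $c_2(Y)=c_2(Z)+N$, $\sum_j A_j^2 = \sum_i C_i^2 - \sum_{k>2}(k+1)t_k$, $\sum_j(g(A_j)-1)=\sum_i(g(C_i)-1)-N$, and $s = t_2 + \sum_{k>2}k\,t_k$) are right, and plugging them in does yield coefficients $3k-4$ and $k-1$ uniformly for all $k\geq 2$, with the $t_2$ terms absorbed seamlessly. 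No gaps.
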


Let $\varphi_n \colon Z_n \to H$ be the blow-up at all the $4$-points of ${\HH}'_n$. Let $\HK$ be the strict transform of ${\HH}'_n$ under $\varphi_n$. Using the formulas in Proposition \ref{logchern}, we have $$\bar{c}_1^2(H,{\HH}'_n)= \frac{8}{3}(n \bar{n})^2 - 12 n \bar{n} +9, \ \ \bar{c}_2(H,{\HH}'_n)=  (n \bar{n})^2 - 4 n \bar{n} + 18,$$ and $$\bar{c}_1^2(Z_n,{\HK})= (n \bar{n})^2+8n \bar{n}- 36, \ \ \bar{c}_2(Z_n,{\HK})=  \frac{1}{3}n \bar{n}(n \bar{n}+12).$$

At this point we highlight that the elimination of all $4$-points from ${\HH}'_n$ make a big asymptotical difference with respect to log invariants.

%----------------------------------------------------------------------------------------------------------------------------------------------
\section{Density for spin simply connected surfaces} \label{densespin}

Recall that for each $i \in \{0,1,\infty,\zeta \}$ we have the commutative diagram $$ \xymatrix{ T \times T \ar[d]^{\pi_i} & B  \ar[l]_{ \ \ \ \ \sigma} \ar[r]^{\pi} & H \ar[dl]_{{\pi'}_i} \ar[d]^{\tau} & Z_n \ar[l]_{ \ \ \varphi_n} \\ T \ar[r] & \PP^1 & \PP^2 & }$$

The three singular fibers of $\pi'_i$ are denoted by $F_{i,1}, F_{i,2}, F_{i,3}$. Each $F_{i,j}$ consists of four $\PP^1$'s: one central curve $N_{i,j}$ with multiplicity $3$, and three reduced curves transversal to $N_{i,j}$ at one point each. We write $N_i=N_{i,1}+N_{i,2}+N_{i,3}$. Let $M$ be the $9$ $\PP^{1}$'s from the lines of the dual Hesse arrangement, and let $N$ be the $12$ exceptional $\PP^{1}$'s from its twelve $3$-points. We have $N=\sum_{i=0,1,\zeta,\infty}N_{i}$, and $$F_{i,1}+F_{i,2}+F_{i,3}=M+3N_{i}.$$

Let $\alpha>0, \beta > 0$ be integers. Let us take $n=12\alpha p$, where $p \geq 5$ is a prime number. Let $\mathcal{E}'_i$ be $8 \beta^2 p^2$ general fibers of $\pi'_i$.  Let $d>0$ be an integer, and let $\AR_{8d}=\sum_{i=1}^{8d} L_i$ be the pre-image under $\tau$ of an arrangement of $8d$ general lines in $\PP^2$ (so it does not contain any singularity of ${\HH}'_n + \sum_{i=0,1,\zeta,\infty} \mathcal{E}'_i$, and it is transversal to all of its curves).

For each $i$ we have that $\mathcal{E}_i \sim \frac{n^2-3}{3} F_i$, where $F_i$ is the class of a fiber of ${\pi'}_i$, and so $$3\mathcal{E}_i + F_{i,1}+F_{i,2}+F_{i,3} \sim  n^2 F_i.$$ We define $a_0=a_1=b_i=1$ for $1\leq i \leq 4d$, and $a_{\infty}=a_{\zeta}=b_i=2p-1$ for $4d+1 \leq i \leq 8d$. Let $L$ be the pull-back of the class of a line. Then $$ \OO_H \Big( \sum_{i=0,1,\zeta,\infty} 3 a_i \mathcal{E}_i + \sum_{i=0,1,\zeta,\infty} 3 a_i \mathcal{E}'_i + \sum_{i=0,1,\zeta,\infty} a_i (F_{i,1}+F_{i,2}+F_{i,3})+ \sum_{i=1}^{8d} 3 b_i L_i \Big)$$ is isomorphic to $\LL_0^{4p}$ where $$\LL_0:= \OO_H \Big( 6p(6\alpha^2+\beta^2)\big(\sum_{i=0,1,\zeta,\infty} a_i F_i \big)+6dL \Big).$$

For each $i$, we denote by the same symbol the strict transform  in $Z_n$ of the divisors $\mathcal{E}_i$, $\mathcal{E}'_i$, $L_j$, $F_{i,j}$.
% by $\mathcal{E}_i$, $\mathcal{E}'_i$, $L_j$, $F_{i,j}$.
Then $$\OO_{Z_n} \big( \sum_{i=0,1,\zeta,\infty} 3 a_i \mathcal{E}_i + \sum_{i=0,1,\zeta,\infty} 3 a_i \mathcal{E}'_i + \sum_{i=0,1,\zeta,\infty} a_i (F_{i,1}+F_{i,2}+F_{i,3})+ \sum_{i=1}^{8d} 3 b_i L_i  \big)$$ is $\LL_1^{4p}$ where  $\LL_1:= \varphi_n^*(\LL_0) \otimes \OO_{Z_n}(-3E)$, and $E$ is the exceptional divisor of $\varphi_n$.

Consider the blow-up $\sigma_n \colon Y_n \to Z_n$ at the $4(n^2-3)$ $3$-points of $\HK$. The exceptional divisor of $\sigma_n$ is denoted by $G$. We denote again the strict transform of $\mathcal{E}_i$, $\mathcal{E}'_i$, $L_j$, $F_{i,j}$, $M$, $N_i$, $N$ in $Y_n$ by the same symbol.
%$\mathcal{E}_i$, $\mathcal{E}'_i$, $L_j$, $F_{i,j}$, $M$, $N$.
Then we have $$\OO_{Y_n} \Big( \sum_{i=0,1,\zeta,\infty} 3 a_i \mathcal{E}_i + \sum_{i=0,1,\zeta,\infty} 3 a_i \mathcal{E}'_i + \sum_{i=0,1,\zeta,\infty} 3 a_i N_i+ \sum_{i=1}^{8d} 3 b_i L_i  \Big) \simeq \LL^{4p}$$ where $\LL:= \sigma_n^*(\LL_1) \otimes \OO_{Y_n}(-M-3G)$.

As in \cite{EV92}, this is the data to construct a $4p$-th root cover of $Y_n$ branch along the nodal curve $$\sum_{i=0,1,\zeta,\infty} \mathcal{E}_i + \sum_{i=0,1,\zeta,\infty} \mathcal{E}'_i + \sum_{i=0,1,\zeta,\infty} N_i+ \sum_{i=1}^{8d} L_i.$$ We now follow the description in \cite{U08,U10}. Notice that the branch multiplicities are either $3$ or $3(2p-1)$, both coprime to $4p$.

We write $$\sum_j \nu_j A_j :=\sum_{i=0,1,\zeta,\infty} 3 a_i \mathcal{E}_i + \sum_{i=0,1,\zeta,\infty} 3 a_i \mathcal{E}'_i + \sum_{i=0,1,\zeta,\infty} 3 a_i N_i+ \sum_{i=1}^{8d} 3 b_i L_i$$ where the $A_j$ are distinct irreducible curves, and $\nu_j$ are the corresponding multiplicities. Set $A=\sum_j A_j$. The construction follows three steps $$ X_n \xrightarrow{f_3} \text{Spec}_{Y_n} \Big( \bigoplus_{i=0}^{4p-1} {\LL^{(i)}}^{-1} \Big) \xrightarrow{f_2} \text{Spec}_{Y_n} \Big( \bigoplus_{i=0}^{4p-1} {\LL}^{-i} \Big) \xrightarrow{f_1} Y_n $$ where $f_1$ is the $4p$-th root cover defined by the $\OO_{Y_n}$-algebra $\bigoplus_{i=0}^{4p-1} {\LL}^{-i}$ induced by $\LL^{4p} \hookrightarrow \OO_{Y_n}$, the morphism $f_2$ is the normalization, where $$\LL^{(i)}:= \LL^i \otimes \OO_{Y_n} \Bigl( - \sum_j \Bigl[\frac{\nu_j \ i}{4p}\Bigr] A_j \Bigr)$$ for $i \in{\{0,1,\ldots,4p-1 \}}$ and $[x]$ is the integral part of $x$, and $f_3$ is the minimal resolution of the singularities of $\text{Spec}_{Y_n} \Big( \bigoplus_{i=0}^{4p-1} {\LL^{(i)}}^{-1} \Big)$. The surface $X_n$ is a nonsingular irreducible projective surface; cf. \cite[\S1]{U10}.

Let $f \colon X_n \to Y_n$ be the composition of the above morphisms. We have the $\QQ$-numerical equivalence $$ K_{X_n} \equiv f^*\Big( K_{Y_n}
+ \frac{(4p-1)}{4p} A \Big) + \Delta,$$ where $\Delta$ is a $\QQ$-divisor supported on the exceptional divisor of $f_3$.

\begin{remark}
In this construction, we can always arrange the multiplicities $\nu_i$ so that $0<\nu_i<4p$. At the end the corresponding surfaces $X_n$ are isomorphic over $Y_n$; cf. \cite[\S1]{U10}. For the sake of simplicity, in this paper we will not adjust multiplicities.
\label{mult}
\end{remark}

The following are key numbers to compute the Chern invariants.

\begin{definition}
Let $q,m$ be coprime integers such that $0<q<m$.
\begin{itemize}
\item[(1)] The associated
\textit{Hirzebruch-Jung continued fraction} is $$ \frac{m}{q} = e_1 - \frac{1}{e_2 - \frac{1}{\ddots - \frac{1}{e_l}}}$$ We denote its \textit{length} as $l(q,m):=l$.

\item[(2)] The \textit{Dedekind sum} associated to the pair $(q,m)$ is defined as
$$s(q,m):= \sum_{i=1}^{m-1} \Bigl(\Bigl(\frac{i}{m}\Bigr)\Bigr)\Bigl(\Bigl(\frac{iq}{m}\Bigr)\Bigr)$$ where $((x))=x-[x]-\frac{1}{2}$ for any
rational number $x$.
\end{itemize}
\end{definition}

These numbers play a central role for the defect caused by the nodes of $A$ in the Chern numbers of $X_n$. General connections between Dedekind sums and geometry can be found in \cite{HiZa74}.

\begin{remark}
Each of the singularities in $\text{Spec}_{Y_n} \Big( \bigoplus_{i=0}^{4p-1} {\LL^{(i)}}^{-1} \Big)$ is over a node of $A$. Let $P \in A$ be a node in $A_i \cap A_j$. Then the singularity over $P$ is locally isomorphic to the normalization of $$(0,0,0) \in \{z^{4p}=x^{\nu_i}y^{\nu_j} \} \subset \CC_{x,y,z}^3 $$ which is the cyclic quotient singularity $\frac{1}{4p}(1,q)$, where $0<q<4p$ with $\nu_i + q \nu_j \equiv 0$ (mod $4p$). For details on these singularities, we refer to \cite{BHPV04}, III \S 5. Let $R=R_1+\ldots +R_{l(q,4p)}$ be the prime decomposition of the exceptional (reduced) divisor over $P$. Let $\tilde{A}_i, \tilde{A}_j$ be the strict pre-images of $A_i, A_j$ in $X_n$ locally over $P$. Then we have $$f^*(A_i)=4p \tilde{A}_i + \sum_{k=1}^{l(q,4p)} c_k R_k \ \ \ \text{and} \ \ \ \ f^*(A_j)= 4p \tilde{A}_j + \sum_{k=1}^{l(q,4p)} d_k R_k,$$ for certain integers $0 < c_k, d_k < 4p$, such that the discrepancy of $R_k$ is (see for example \cite[\S4.3.2]{U08}, \cite{U10} for precise numbers) $$\text{disc}(R_k)=-1+ \frac{c_k}{4p}+\frac{d_k}{4p}.$$ We recall that disc$(R_k) \in ]-1,0] \cap \QQ$ is the coefficient of $R_k$ in $\Delta$.
\label{discr}
\end{remark}

In our case, we have two types of singularities. We have singularities of type $\frac{1}{4p}(1,4p-1)$ over the nodes in a $A_i \cap A_j$ with $\nu_i=\nu_j$, and of type $\frac{1}{4p}(1,2p+1)$ over the other nodes.

\begin{proposition}
The surface $X_n$ is simply connected.
\label{simply}
\end{proposition}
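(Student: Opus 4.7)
The plan is to construct, on a suitable blow-up $\tilde X_n \to X_n$, a fibration $\Phi \colon \tilde X_n \to \mathbb{P}^1$ admitting both a section and a simply connected fiber, and then to invoke the criterion of Xiao \cite{X91}: such a fibration over $\mathbb{P}^1$ forces $\pi_1(\tilde X_n) = 1$, and since blow-ups preserve the fundamental group, one concludes $\pi_1(X_n) = \pi_1(\tilde X_n) = 1$.

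The fibration itself is obtained by pulling back the elliptic fibration $\pi'_0 \colon H \to \mathbb{P}^1$ along $\tilde X_n \to X_n \xrightarrow{f} Y_n \xrightarrow{\sigma_n} Z_n \xrightarrow{\varphi_n} H$, resolving indeterminacy by blow-ups on $X_n$ as necessary and passing to the Stein factorization to ensure connected fibers. A section of $\Phi$ is produced from the branch locus: following the diagram of Section \ref{arrang}, for each $i \in \{1, \zeta, \infty\}$ the curve $T_i \subset T \times T$ is a section of $\pi_0$, so its image $N_{i,j}$ under $\pi$ is a section of $\pi'_0$ in $H$. Because $N_i$ appears in the branch divisor of $f$ with multiplicity $3 a_i$ coprime to $4p$, the cover is totally ramified along $N_{i,j}$; hence $f^{-1}(N_{i,j})$ is an irreducible curve that maps birationally onto $N_{i,j}$, and its strict transform in $\tilde X_n$ gives the desired section.

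The crux is the simply connected fiber. The natural candidate is the preimage of a singular fiber $F_{0,j} = 3 N_{0,j} + L'_1 + L'_2 + L'_3$ of $\pi'_0$, with the $L'_k$ rational components coming from $M$. The central curve $N_{0,j}$ lies in the branch locus with multiplicity coprime to $4p$, so its reduced preimage $\tilde N_{0,j}$ is a smooth rational curve; likewise the Hirzebruch--Jung resolutions produced by $f_3$ over the nodes of $A$ lying on $F_{0,j}$ are trees of smooth rational curves. What must be verified is that the preimages $\tilde L'_k$ of the non-branch leaves $L'_k$, together with the components introduced by the blow-ups of $X_n$, assemble into a simply connected configuration, so that the whole fiber of $\Phi$ is a tree of smooth rational curves. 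This is the main obstacle: each $\tilde L'_k \to L'_k \cong \mathbb{P}^1$ is a connected $4p$-cyclic cover ramified transversally over $L'_k \cap A$, and a Riemann--Hurwitz count shows that its geometric genus depends delicately on the multiplicity data $3 a_i, 3 b_i$, on the number $d$ of auxiliary lines, and on the primality of $p \geq 5$. Arranging the combinatorics so that $\tilde L'_k$ reduces (after the blow-ups) to a tree of rational curves is the technical heart of the argument; precisely this is where the numerical choices $a_0 = a_1 = 1$, $a_\infty = a_\zeta = 2p - 1$, and $b_i \in \{1, 2p-1\}$ enter essentially. Once this is established, Xiao's method applies and completes the proof.
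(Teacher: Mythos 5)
Your overall strategy coincides with the paper's: produce, after a blow-up, a fibration to $\PP^1$ with sections and a simply connected fiber, then invoke Xiao's theorem \cite{X91}. However, the specific fibration you choose does not work, and the gap you flag at the end is not a technical loose end but a genuine obstruction. You pull back the elliptic fibration $\pi'_0$ and hope that the preimage of a singular fiber $F_{0,j}=3N_{0,j}+L'_1+L'_2+L'_3$ (with $L'_k$ components of $M$) is a tree of rational curves. The central curve $N_{0,j}$ is in the branch locus and is totally ramified, so its preimage is a smooth $\PP^1$, fine. But the $M$-components $L'_k$ are \emph{not} in the branch divisor $A$: the branch consists only of $\mathcal{E}_i$, $\mathcal{E}'_i$, $N_i$, and the general lines $L_j$. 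Each $L'_k$ meets $A$ transversally at $4+8d$ points (four of the $N$-curves plus all $8d$ lines $L_j$), with branch multiplicities $3a_i,3b_j$ all coprime to $4p$. The restricted cover $\tilde L'_k\to L'_k\cong\PP^1$ is therefore a connected degree-$4p$ cyclic cover totally ramified at $4+8d$ points (connectedness follows since $\gcd(3,4p)=1$), and Riemann--Hurwitz gives $g(\tilde L'_k)=1-4p+(2+4d)(4p-1)$, which is large and positive for all $p\ge 5$, $d\ge 1$. Blowing up $X_n$ cannot lower the genus of this component, so the candidate fiber can never become a tree of rational curves. No choice of the multiplicities $a_i,b_j$ (which are in any case forced to be coprime to $4p$) changes this conclusion.

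The paper sidesteps this by using a different pencil: fix a general point $P$ on one of the branch lines $L_1\subset\mathcal{A}_{8d}$, and take the pencil of lines through $P$ on $\PP^2$, pulled back to $Y_n$ and blown up at $P$ to obtain $h\colon Y'_n\to\PP^1$. Pulling back the cover to $X'_n$, the fiber of the induced fibration over the point corresponding to $L_1$ consists of the strict preimage of $L_1$ --- which is a single $\PP^1$ because $L_1$ is in the branch locus with multiplicity coprime to $4p$, hence totally ramified --- together with the Hirzebruch--Jung chains of rational curves from the resolution of the cyclic quotient singularities lying over nodes of $A$ on $L_1$ and the exceptional curve of $\psi$. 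This is a tree of $\PP^1$'s. Sections come from the exceptional locus, and $\psi'\colon X'_n\to X_n$ is a composition of blow-downs, so $\pi_1(X_n)=\pi_1(X'_n)=1$. The key point you missed is that the simply connected fiber must come from a curve that is itself \emph{in} the branch locus (so its preimage is a $\PP^1$), which the general lines $L_j$ were added to the construction precisely to provide.
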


\begin{proof}
This is the same proof as in \cite[\S8]{U10}, we recall it. Take a general point in $P \in L_1 \subset Y_n$. Through that point we have the trivial pencil of lines from $\PP^2$. Let $\psi \colon Y'_n \to Y_n$ be the blow-up of $P$. We now take the pulled-back $4p$-th root cover $f' \colon X'_n \to Y'_n$, coming from the $f \colon X_n \to Y_n$, so that we have the commutative diagram $$ \xymatrix{X'_n \ar[r]^{f'} \ar[d]^{\psi'} & Y'_n \ar[d]^{\psi} \\ X_n \ar[r]^f & Y_n }$$ where $\psi' \colon X'_n \to X_n$ is the blow-down of a chain of $4p$ $\PP^1$'s, starting with a $(-1)$-curve followed by $4p-1$ $(-2)$-curves. Notice that $X'_n$ has a fibration $h \colon X'_n \to \PP^1$ with connected fibers, sections, and at least one simply connected fiber which comes from the line $L_1$. (Singularities coming from points in $L_1$ are resolved through chains of smooth rational curves.) Then we apply \cite{X91} to conclude that $X'_n$ is simply connected, so is $X_n$. For more details see \cite[Prop.8.3]{U10}.
\end{proof}

\begin{proposition}
The surface $X_n$ is spin. In particular, it is minimal.
\label{spin}
\end{proposition}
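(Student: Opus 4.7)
Since $X_n$ is simply connected (Proposition \ref{simply}), by Wu's formula the spin condition is equivalent to $K_{X_n} \in 2\,\text{Pic}(X_n)$. The plan is first to establish $2$-divisibility of two auxiliary classes on the base $Y_n$, then to transport these divisibilities through the cyclic-cover construction while tracking the parity contribution from the resolution $f_3$.

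On $Y_n$, the relation $\sum_j \nu_j A_j \sim 4p\,\LL$ together with the oddness of every $\nu_j \in \{3,6p-3\}$ immediately gives
\[
A \;=\; \sum_j A_j \;\equiv\; \sum_j \nu_j A_j \;\equiv\; 4p\LL \;\equiv\; 0 \pmod 2
\]
in $\text{Pic}(Y_n)$. A more delicate parity check, that $K_{Y_n}+\LL$ is also $2$-divisible, proceeds by unwinding the definitions in Section \ref{densespin}: since every coefficient of $\LL_0$ is a multiple of $6$, one gets $\LL \equiv E + M + G \pmod 2$, where $E,M,G$ denote the exceptional divisors from the two successive blow-ups and the strict transform of the dual Hesse line arrangement. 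Combined with $K_{Y_n} = -3L + N + E + G$ and the relation $M = 9L - 3N$ on $H$ (immediate from Proposition \ref{dualHesse}), this yields
\[
K_{Y_n} + \LL \;\equiv\; L + N + M \;\equiv\; 0 \pmod 2.
\]
Thus there exists $M_0 \in \text{Pic}(Y_n)$ with $2 M_0 = K_{Y_n} + \LL$.

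To lift these parities to $X_n$, I would use the integral pullback relation $f^* A_j = 4p\,\tilde A_j + \sum_k c_k^{(j)} R_k$ from Remark \ref{discr} together with the derived relation $f^*\LL = \sum_j \nu_j \tilde A_j + \sum_k \rho_k R_k$ (obtained by combining the previous with $4p\LL \sim \sum_j \nu_j A_j$). Writing $\nu_j = 2 m_j + 1$, substituting the resulting expression for $\sum_j \tilde A_j$ into the cyclic-cover formula for $K_{X_n}$, and using $f^*(K_{Y_n}+\LL) = 2 f^* M_0$, one arrives at an equality of the shape
\[
K_{X_n} \;=\; 2\Bigl( f^* M_0 + (2p-1)\, f^*\LL - (4p-1)\textstyle\sum_j m_j\, \tilde A_j \Bigr) \;+\; \Xi,
\]
with $\Xi$ an explicit combination of the exceptional curves $R_k$ of $f_3$ encoding both the discrepancy $\Delta$ and the $\rho_k$. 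The task reduces to showing $\Xi \in 2\,\text{Pic}(X_n)$. I would verify this by a local computation at each singularity listed after Remark \ref{discr}, using the Hirzebruch--Jung continued fractions $4p/(4p-1) = [\,\underbrace{2,\ldots,2}_{4p-1}\,]$ and $4p/(2p+1) = [\,2,\,p+1,\,2\,]$. Since $p \ge 5$ is an odd prime, $p+1$ is even, so every entry of both chains is even; in particular $R_k^2 \equiv 0 \pmod 2$, and by adjunction $K_{X_n} \cdot R_k$ is even for every exceptional curve. Together with the congruence $\nu_i c_k + \nu_j d_k \equiv 0 \pmod{4p}$ from Remark \ref{discr}, this forces each $R_k$ to enter $\Xi$ with an even coefficient.

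Granting this, $K_{X_n} \in 2\,\text{Pic}(X_n)$, so $X_n$ is spin. Minimality is then immediate: any $(-1)$-curve $C \subset X_n$ would satisfy $K_{X_n}\cdot C = -1$, contradicting $K_{X_n} \in 2\,\text{Pic}(X_n)$. The main obstacle in this plan is the last step, the combinatorial verification that $\Xi$ is $2$-divisible: one must compute $c_k, d_k, \rho_k$ explicitly via the Hirzebruch--Jung recurrence at each node of $A$ and check the parity balance, a calculation in which the oddness of $p$ is used decisively through the middle entry $p+1$ of the chain $[2,p+1,2]$, and which also implicitly uses that the two types of multiplicity $\nu_j$ were chosen to be both odd and coprime to $4p$.
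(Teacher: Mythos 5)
Your overall plan is aligned with the paper's: reduce the spin question to $2$-divisibility of $K_{X_n}$ via simple connectedness, exploit the relation $\sum_j \nu_j A_j \sim 4p\,\LL$ together with the oddness of all $\nu_j$, pull back along $f$, and isolate a remainder $\Xi$ supported on the exceptional curves of $f_3$ whose parity must be checked. The divisibility of $K_{Y_n}+\LL$ on the base (and the reduction $f^*(-3L+N+G+M)\equiv 0$ via $M+3N+3G\sim 9L$) is also what the paper does, modulo some notational slips: you wrote $K_{Y_n}=-3L+N+E+G$ and ``$M=9L-3N$ on $H$'', but with $N$ denoting strict transforms in $Y_n$ the correct statements are $K_{Y_n}\sim -3L+N+E+2G$ and $M+3N+3G\sim 9L$; the two $G$-discrepancies happen to cancel in your computation, so you land on the right congruence by luck rather than by design.

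The genuine gap is the final step, and it is not a detail. You propose to conclude that $\Xi$ is $2$-divisible from two ``soft'' facts: every entry $e_k$ of the continued fractions $[\underbrace{2,\dots,2}_{4p-1}]$ and $[2,p+1,2]$ is even (using $p$ odd), hence $R_k^2\equiv 0$ and $K_{X_n}\cdot R_k\equiv 0\ (\mathrm{mod}\ 2)$; and the torsion relation from Remark~\ref{discr}. But ``$K_{X_n}\cdot R_k$ even for all $k$'' only gives $\Xi\cdot R_k\equiv 0\ (\mathrm{mod}\ 2)$ for all $k$, and this does \emph{not} force all coefficients of $\Xi$ to be even, because the intersection matrix of the chain $R_1,R_2,R_3$ resolving $\frac{1}{4p}(1,2p+1)$ reduces mod $2$ to
\[
\begin{pmatrix}0&1&0\\1&0&1\\0&1&0\end{pmatrix},
\]
which has a one-dimensional kernel over $\F_2$ spanned by $(1,0,1)$; thus $\Xi=R_1+R_3$ would also pass your test. (Also, the congruence ``$\nu_i c_k+\nu_j d_k\equiv 0\pmod{4p}$'' you invoke is not among the relations stated in Remark~\ref{discr}; the remark gives $\nu_i+q\nu_j\equiv 0$ and the discrepancy formula $\operatorname{disc}(R_k)=-1+\frac{c_k}{4p}+\frac{d_k}{4p}$.) The paper resolves this by computing the coefficients explicitly: over $\frac{1}{4p}(1,4p-1)$ the coefficient is $(6p-4)(4p-1)$ or $2(4p-1)$ with $\Delta=0$, and over $\frac{1}{4p}(1,2p+1)$ one has
\[
\bar\Delta=\tfrac{4p-1}{4p}\bigl((10p-2)R_1+(12p-4)R_2+(12p^2-2p-2)R_3\bigr),\qquad -\Delta=\tfrac{p-1}{2p}R_1+\tfrac{p-1}{p}R_2+\tfrac{p-1}{2p}R_3,
\]
from which $-\bar\Delta+\Delta$ is an even integral divisor. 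This explicit evaluation is essential; the adjunction/parity heuristic you substitute for it is insufficient. If you carry out the Hirzebruch--Jung recurrence and actually compute $c_k,d_k$ as the paper's references \cite{U08,U10} describe, your argument can be completed, but as written the proof is not there.
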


\begin{proof}
By Proposition \ref{simply}, the surface $X_n$ is simply connected. Thus we only need to prove that $K_{X_n}$ is numerically $2$-divisible.

Let $\mathcal{E}:=\sum_{i=0,1,\zeta,\infty} \mathcal{E}_i$ and $\mathcal{E}':=\sum_{i=0,1,\zeta,\infty} \mathcal{E}'_i$. We have $$K_{X_n} \equiv f^*\Big(-3L + N + E + 2G + \frac{(4p-1)}{4p}(\mathcal{E}+\mathcal{E}'+N+\mathcal{A}_{8d})\Big) + \Delta$$ where $\Delta$ is a $\QQ$-divisor supported on the exceptional locus of the singularities $\frac{1}{4p}(1,2p+1)$ over the nodes of $A$, which are locally of the type $\{x^3y^{3(2p-1)}=0\}$ in $\sum_j \nu_j A_j$. For the other nodes we have rational double points of type $\frac{1}{4p}(1,4p-1)$, and so they do not contribute to $\Delta$.

Notice that $$\mathcal{L}^{4p} -2 \Big(\mathcal{E}_{0}+N_0+\mathcal{E}'_0 + \mathcal{E}_{1}+\mathcal{E}'_1+N_1+\sum_{i=1}^{4d} L_i \Big) $$ $$  -(6p-4)\Big(\mathcal{E}_{\infty}+N_{\infty}+\mathcal{E}'_{\infty}+\mathcal{E}_{\zeta}+N_{\zeta}+\mathcal{E}'_{\zeta}+\sum_{i=4d+1}^{8d} L_i \Big) \sim \mathcal{E}+\mathcal{E}'+N+\mathcal{A}_{8d}.$$ Then $f^*\big(\frac{(4p-1)}{4p}(\mathcal{E}+\mathcal{E}'+N+\mathcal{A}_{8d})\big)$ is numerically equivalent to $$f^*\Big((4p-1) \mathcal{L}\Big) - 2(4p-1)\Big(\bar{\mathcal{E}}_{0}+\bar{\mathcal{E}}'_0+\bar{N}_0+ \bar{\mathcal{E}}_{1}+\bar{\mathcal{E}'}_1+\bar{N}_1+\sum_{i=1}^{4d} \bar{L}_i \Big)  $$ $$ - (6p-4)(4p-1)\Big(\bar{\mathcal{E}}_{\infty}+\bar{\mathcal{E}}'_{\infty}+\bar{N}_{\infty}+\bar{\mathcal{E}}_{\zeta}+\bar{\mathcal{E}'}_{\zeta}+\bar{N}_{\zeta}+
\sum_{i=4d+1}^{8d} \bar{L}_i \Big) -\bar{\Delta}$$ where $\bar{\Gamma}$ denotes the strict (reduced) transform of a curve $\Gamma$ under $f$, and $\bar{\Delta}$ is $\QQ$-numerically effective and supported in the exceptional divisors from $\frac{1}{4p}(1,4p-1)$, and $\frac{1}{4p}(1,2p+1)$.

Over one $\frac{1}{4p}(1,4p-1)$, we have that $\bar{\Delta}$ is either $(6p-4)(4p-1)R$ or $2(4p-1)R$ according to the multiplicities $\nu_i=\nu_j$ in the corresponding node, where $R$ is the chain of $4p-1$ $(-2)$-curves which resolves $\frac{1}{4p}(1,4p-1)$. In this case $\Delta=0$, so $-\bar{\Delta}+\Delta$ is even.

The singularity $\frac{1}{4p}(1,2p+1)$ is resolved by a chain of three $\PP^1$'s, say $R_1$, $R_2$, $R_3$, such that $R_1^2=R_3^2=-2$ and $R_2^2=-(p+1)$. One can compute that over one $\frac{1}{4p}(1,2p+1)$, we have that $$\bar{\Delta}=\frac{4p-1}{4p} \Big( (10p-2)R_1 + (12p-4)R_2 + (12p^2-2p-2)R_3 \Big)$$ and $-\Delta=\frac{p-1}{2p}R_1 + \frac{p-1}{p}R_2 +\frac{p-1}{2p}R_3$. Therefore $-\bar{\Delta}+\Delta \in \ZZ$ and even.

Thus we only need to prove that $f^*\big(-3L + N + E + 2G +(4p-1)\mathcal{L} \big)$ is divisible by $2$. But $$\mathcal{L} \sim 6p(6\alpha^2+\beta^2)(a_0 F_0 + a_1 F_1 + a_{\infty} F_{\infty} + a_{\zeta} F_{\zeta})-3E-3G-M+6dL$$ so we only need $f^*(-3L+ N + G +M)$ to be divisible by $2$. But $M+3N+3G \sim 9L$, since this is the pull-back of the dual Hesse arrangement.
\end{proof}

The arrangement $A$ has only $2$-points, and its number is $$t_2 = 13824\beta^2\alpha^2 p^4+1152 \beta^4 p^4+4608d\alpha^2 p^2+768d \beta^2 p^2+32d^2-100d.$$ By Proposition \ref{logchern}, its log Chern numbers are $$\bar{c}_1^2= n^4 +2t_2 -40d -48 \ \ \text{and} \ \ \bar{c}_2= \frac{n^4}{3} +t_2 -16d -12.$$

For the Chern numbers, we have $$ c_1^2(X_n) = 4p \, \bar{c}_1^2 - 2\Big(t_2 + 2 \sum_j (g(A_j)-1) \Big) + \frac{1}{4p} \sum_j A_j^2 - \sum_{i<j} c(q_{i,j},4p) A_i \cdot A_j$$ and $$c_2(X_n) = 4p \, \bar{c}_2 - \Big(t_2 + 2 \sum_j (g(A_j)-1) \Big) + \sum_{i<j} l(q_{i,j},4p) A_i \cdot A_j$$ where $0<q_{i,j}<4p$ with $\nu_i + q_{i,j} \nu_j \equiv 0$ (mod $4p$), and $$c(q_{i,j},4p):=12s(q_{i,j},4p)+l(q_{i,j},4p)$$ See \cite[\S4.3]{U08} for proofs which work whenever the multiplicities of the branch divisor (in our case $3$ and $2p-3$ after reduction modulo $4p$) are coprime to the order of the cyclic cover (in our case $4p$).

We also have that $c(4p-1,4p)=\frac{4p-1}{2p}$ and $c(2p+1,4p)=\frac{2p^2+1}{2p}$ \cite[p.345]{U10}, and $l(4p-1,4p)=4p-1$ and $l(2p+1,4p)=3$. Therefore, $$\sum_{i<j} c(q_{i,j},4p) A_i \cdot A_j=\frac{(4p-1)}{2p} t_{2,1}+ \frac{(2p^2+1)}{2p} t_{2,2}$$ and $$\sum_{i<j} l(q_{i,j},4p) A_i \cdot A_j= (4p-1) t_{2,1}+ 3 t_{2,2}$$ where $t_{2,1}$ and $t_{2,2}$ are the number of $2$-points corresponding to the singularities $\frac{1}{4p}(1,4p-1)$ and $\frac{1}{4p}(1,2p+1)$ respectively. Hence $$t_{2,1}=384 \beta^4 p^4+4608 \alpha^2 \beta^2 p^4+2304 d \alpha^2 p^2-52 d+384 d \beta^2 p^2+16 d^2$$ and $$t_{2,2}=768\beta^4 p^4+ 9216 \alpha^2 \beta^2 p^4 +2304 d \alpha^2 p^2-48d+384 d \beta^2 p^2+16 d^2.$$

By plugging in the formulas above, we obtain that $$\text{lim}_{p \to \infty} \ \frac{c_1^2}{c_2} = \frac{108x^4+132x^2+11}{36x^4+96x^2+8}=:\lambda(x)$$ where $x=\alpha/\beta$. Notice that $[2,3] \subset \lambda\big([0,\infty^+]\big)=[1.375,3]$.

\begin{theorem}
For any number $r \in [1.375,3]$, there are spin simply connected minimal surfaces of general type $X$ with $c_1^2(X)/c_2(X)$ arbitrarily close to $r$.
\label{main}
\end{theorem}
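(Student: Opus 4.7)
The plan is to combine the structural results already established for $X_n$ with the asymptotic Chern-ratio formula displayed immediately above. By Proposition \ref{simply}, $X_n$ is simply connected; by Proposition \ref{spin}, $X_n$ is spin, hence minimal. Two things remain: density of $c_1^2(X_n)/c_2(X_n)$ in $[1.375,3]$, and the verification that $X_n$ is of general type.

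First I would analyze the function $\lambda(x) = \frac{108x^4+132x^2+11}{36x^4+96x^2+8}$ for $x \geq 0$. Setting $u = x^2$, a short derivative computation yields $\frac{d\lambda}{du} = \frac{936\,u(6u+1)}{(36u^2+96u+8)^2}$, which is non-negative and vanishes only at $u=0$. Therefore $\lambda$ is continuous and strictly increasing on $[0,\infty)$, with $\lambda(0)=11/8$ and $\lim_{x\to\infty}\lambda(x)=3$. It follows that $\lambda$ maps $[0,\infty)$ homeomorphically onto $[11/8,3)$, and in particular $\{\lambda(\alpha/\beta) : \alpha,\beta \in \ZZ_{>0}\}$ is dense in $[11/8,3]$.

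Given $r \in [1.375,3]$ and $\epsilon > 0$, I would choose positive integers $\alpha,\beta$ with $|\lambda(\alpha/\beta) - r| < \epsilon/2$, fix $d \geq 1$ arbitrarily, and then select a prime $p \geq 5$ large enough that, with $n = 12\alpha p$, the ratio $c_1^2(X_n)/c_2(X_n)$ lies within $\epsilon/2$ of $\lambda(\alpha/\beta)$; the existence of such a $p$ is guaranteed by the limit formula preceding the theorem. The triangle inequality then yields $|c_1^2(X_n)/c_2(X_n) - r| < \epsilon$.

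It remains to check that $X_n$ is of general type. Minimality is given by Proposition \ref{spin}. From the polynomial expressions for $c_1^2(X_n)$ and $c_2(X_n)$, together with the fact that $c_1^2(X_n)/c_2(X_n) \to \lambda(x) > 0$, one has $c_1^2(X_n) \to \infty$ as $p \to \infty$, so for $p$ sufficiently large $c_1^2(X_n) > 9$. A simply connected minimal surface with $c_1^2 > 9$ cannot be rational (rational surfaces have $c_1^2 \leq 9$), nor ruled over a curve of positive genus (where $c_1^2 \leq 0$), nor of Kodaira dimension $0$ or $1$ (where $c_1^2 = 0$); it must therefore be of general type. The only genuine obstacle is the monotonicity analysis of $\lambda$, reduced above to a one-line derivative computation; the remainder is a direct assembly of Propositions \ref{simply} and \ref{spin} with the displayed limit.
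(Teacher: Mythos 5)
Your proof follows essentially the same route as the paper's: invoke Propositions \ref{simply} and \ref{spin} for simple connectedness and spinness (hence minimality), choose $\alpha/\beta$ so that $\lambda(\alpha/\beta)$ approximates $r$, then take $p$ large so that the actual Chern slope approximates the limit, and appeal to Enriques' classification for general type. The only additions you make — the explicit derivative $\frac{d\lambda}{du}=\frac{936\,u(6u+1)}{(36u^2+96u+8)^2}$ showing strict monotonicity of $\lambda$, and the observation that a simply connected minimal surface with $c_1^2>9$ must have $\kappa=2$ — are correct and just fill in details that the paper leaves to the reader.
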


\begin{proof}
Let $\epsilon>0$. Then there exists a positive rational number $\alpha/\beta$ such that $|\lambda(\alpha/\beta)-r|<\epsilon$. We take those $\alpha, \beta$ for the construction above. The surfaces $X_n$ are spin by Proposition \ref{spin}, and simply connected by Proposition \ref{simply}. Using Enriques' classification of surfaces, it is clear that the surfaces $X_n$ are of general type for $p>>0$, since $c_1^2(X_n)$ and $c_2(X_n)$ approach $+\infty$ as $p$ tends to $+\infty$.
\end{proof}

\begin{example}
For parameters $\alpha=1$, $\beta=0$, $d=1$, and $p=5$, we obtain a spin simply connected surface with $c_1^2=2^6\cdot 3^2\cdot 455393$ and $c_2=2^5\cdot3^3\cdot 103997$, thus with Chern slope around $2.919$. For $p=100003$, we get a Chern slope close to $3$ up to $2\cdot 10^{-10}$.
\end{example}

%----------------------------------------------------------------------------------------------------------------------------------------------
\section{Non-spin surfaces and other fundamental groups} \label{other}

For non-spin surfaces, we consider the same set up as in Section \ref{spin} but for a $p$-th root cover. We keep same notation as in the previous section up to some explicitly redefined numbers and symbols.

Let $p \geq 5$ be a prime number, and let $\alpha>0, \beta > 0$ be integers. Let $n=3 \alpha p$. As before, we consider the arrangement ${\HH}'_n$ in $H$. Let $\mathcal{E}'_i$ be $\beta^2 p^2$ general fibers of $\pi'_i$, and let $\AR_{2d}=L_1+ \ldots + L_{2d}$ be the strict transform of an arrangement of $2d$ general lines in $\PP^2$, where $3 \leq 2d \leq p$. We define $a_0=a_1=b_i=1$ for $1\leq i \leq d$, and $a_{\infty}=a_{\zeta}=b_i=p-1$ for $d+1 \leq i \leq 2d$. Then $$ \OO_H \Big( \sum_{i=0,1,\zeta,\infty} 3 a_i \mathcal{E}_i + \sum_{i=0,1,\zeta,\infty} 3 a_i \mathcal{E}'_i + \sum_{i=0,1,\zeta,\infty} a_i (F_{i,1}+F_{i,2}+F_{i,3})+ \sum_{i=1}^{2d} 3 b_i L_i \Big)$$ is isomorphic to $\LL_0^{p}$ where $$\LL_0:= \OO_H \Big( 3p(3\alpha^2+\beta^2)\big(\sum_{i=0,1,\zeta,\infty} a_i F_i \big)+3dL \Big).$$

For each $i$, we denote the strict transform of $\mathcal{E}_i$, $\mathcal{E}'_i$, $L_j$, $F_{i,j}$ in $Z_n$ by the same symbol.
%$\mathcal{E}_i$, $\mathcal{E}'_i$, $L_j$, $F_{i,j}$.
Then $$\OO_{Z_n} \big( \sum_{i=0,1,\zeta,\infty} 3 a_i \mathcal{E}_i + \sum_{i=0,1,\zeta,\infty} 3 a_i \mathcal{E}'_i + \sum_{i=0,1,\zeta,\infty} a_i (F_{i,1}+F_{i,2}+F_{i,3})+ \sum_{i=1}^{2d} 3 b_i L_i  \big)$$ is $\LL_1^{p}$ where  $\LL_1:= \varphi_n^*(\LL_0) \otimes \OO_{Z_n}(-6E)$, and $E$ is the exceptional divisor of $\varphi_n$. Again, we denote the strict transform of $\mathcal{E}_i$, $\mathcal{E}'_i$, $L_j$, $F_{i,j}$, $M$, $N_i$, $N$ in $Y_n$ by the same symbol.
%$\mathcal{E}_i$, $\mathcal{E}'_i$, $L_j$, $F_{i,j}$, $M$, $N$.
Then we have $$\OO_{Y_n} \Big( \sum_{i=0,1,\zeta,\infty} 3 a_i \mathcal{E}_i + \sum_{i=0,1,\zeta,\infty} 3 a_i \mathcal{E}'_i + \sum_{i=0,1,\zeta,\infty} 3 a_i N_i+ \sum_{i=1}^{2d} 3 b_i L_i  \Big) \simeq \LL^{p}$$ where $\LL:= \sigma_n^*(\LL_1) \otimes \OO_{Y_n}(-2M-6G)$. With this data, we construct a $p$-th root cover of $Y_n$ branch along $$A:= \sum_{i=0,1,\zeta,\infty} \mathcal{E}_i + \sum_{i=0,1,\zeta,\infty} \mathcal{E}'_i + \sum_{i=0,1,\zeta,\infty} N_i+ \sum_{i=1}^{2d} L_i.$$ Let $f \colon X_p \to Y_n$ be the corresponding morphism for the $p$-th root cover, as in the previous section.

\begin{proposition}
The surfaces $X_p$ are simply connected.
\label{simply2}
\label{min}
\end{proposition}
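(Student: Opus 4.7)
My plan is to reproduce the argument of Proposition \ref{simply} with $4p$ replaced by $p$ throughout. The setup is entirely parallel: we have a cyclic cover $f\colon X_p \to Y_n$ of prime degree $p \geq 5$, with branch divisor a reduced simple-normal-crossings curve $A$, and with branch multiplicities belonging to $\{3,\, 3(p-1)\}$. Both of these are coprime to $p$ (because $p$ is prime and $p \neq 3$), which is precisely the numerical fact the spin argument exploits.

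First I would choose a general point $P \in L_1 \subset Y_n$ and let $\psi\colon Y'_n \to Y_n$ be its blow-up. Pulling back the cover gives the commutative diagram
$$\xymatrix{X'_n \ar[r]^{f'} \ar[d]^{\psi'} & Y'_n \ar[d]^{\psi}\\ X_p \ar[r]^{f} & Y_n}$$
as in the proof of Proposition \ref{simply}, where $\psi'$ contracts a single chain of smooth rational curves lying over $P$: a $(-1)$-curve followed by $p-1$ $(-2)$-curves (this chain arises from the resolution of the new cyclic quotient point introduced by $\psi$). The pencil of lines in $\PP^2$ through the image of $P$ lifts to a base-point-free pencil on $Y'_n$, and hence induces a morphism $h\colon X'_n \to \PP^1$ with connected fibers. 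A section of $h$ is produced just as in the spin case from the exceptional curve $E_P$.

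Next I would argue that the fiber of $h$ over the point of $\PP^1$ corresponding to $L_1$ is simply connected. Up to Hirzebruch-Jung resolutions along $L_1$, this fiber is the pre-image $(f')^{-1}$ of the strict transform of $L_1$. Since $L_1$ appears in the branch with multiplicity $3 b_1 = 3$, coprime to $p$, its pre-image is a single irreducible rational curve, and the Hirzebruch-Jung resolutions of the cyclic quotient singularities it meets are trees of smooth rational curves. Therefore this fiber has trivial fundamental group.

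Finally I would apply Xiao's criterion \cite{X91} verbatim: a fibration over $\PP^1$ with a section and a simply connected fiber has simply connected total space, so $X'_n$ is simply connected, and then so is $X_p$, which is obtained from $X'_n$ by contracting a chain of smooth rational curves. The only point requiring care is the verification that the chosen fiber is actually simply connected, but this reduces to the local coprimality of branch multiplicities with $p$ which, as noted above, holds because $p \geq 5$ is a prime distinct from $3$.
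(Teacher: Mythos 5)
Your proposal is correct and reproduces the paper's intended argument: the paper's own proof of this proposition is a one-line reference back to Proposition \ref{simply} (with the arrangement $\AR_{2d}$ in place of $\AR_{8d}$), and you have written out exactly that argument with $4p$ replaced by $p$. Two small remarks: the notation should be $X'_p$ rather than $X'_n$ for the pulled-back cover to match the paper's usage in Corollary \ref{fund}; and the coprimality you invoke is what the simply-connectedness argument exploits (not specifically the spin argument), but your verification that $3$ and $3(p-1)$ are both coprime to the prime $p\geq 5$ is correct and is the point that makes the Hirzebruch--Jung resolutions over $L_1$ trees of rational curves.
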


\begin{proof}
This is the same as Proposition \ref{simply} using $\AR_{2d}$.
\end{proof}

Let us write $$\sum_j \nu_j A_j= \sum_{i=0,1,\zeta,\infty} 3 a_i \mathcal{E}_i + \sum_{i=0,1,\zeta,\infty} 3 a_i \mathcal{E}'_i + \sum_{i=0,1,\zeta,\infty} 3 a_i N_i+ \sum_{i=1}^{2d} 3 b_i L_i$$ where $A_j$ are the irreducible curves in $A$. Hence $\nu_j$ is equal to either $3a_i$ or $3b_k$ for some $i,k$. The absence of $\PP^1$'s coming from $k$-points in the branch divisor allows us to prove the following \cite[IV \S4.3.2]{U08}.

\begin{proposition}
The surfaces $X_p$ are minimal.
\label{min}
\end{proposition}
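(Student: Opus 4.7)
My plan is to show $X_p$ contains no $(-1)$-curves, following the framework of \cite[IV \S 4.3.2]{U08}. Any irreducible curve $\tilde C\subset X_p$ either is contracted by $f$, maps birationally onto a component $A_j$ of $A$, or maps onto an irreducible curve $C\subset Y_n$ not contained in $A$. First I would dispose of contracted curves: the only singularities of the singular model are $\frac{1}{p}(1,p-1)$ (above nodes joining branches of equal multiplicity, resolved by a chain of $p-1$ $(-2)$-curves) and $\frac{1}{p}(1,1)$ (above nodes joining a branch of multiplicity $3$ with one of multiplicity $3(p-1)$, resolved by a single $(-p)$-curve). For $p\geq 5$ neither resolution produces a $(-1)$-curve.

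Second, if $f(\tilde C)=A_j$, coprimality $\gcd(\nu_j,p)=1$ forces $\tilde C\cong A_j$, so $\tilde C$ is elliptic whenever $A_j\in\{\mathcal{E}_i,\mathcal{E}'_i\}$. For the rational components $A_j\in\{N_{i,k},L_j\}$ I would apply the projection formula to $K_{X_p}=f^{*}(K_{Y_n}+\tfrac{p-1}{p}A)+\Delta$, giving
\[
K_{X_p}\cdot\tilde A_j \;=\; K_{Y_n}\cdot A_j+\tfrac{p-1}{p}\,A\cdot A_j+\Delta\cdot\tilde A_j,
\]
and exploit that $A_j$ meets each of the roughly $p^2$ components of $\mathcal{E}+\mathcal{E}'$ positively, so $A\cdot A_j$ grows like $p^2$. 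This forces $K_{X_p}\cdot\tilde A_j\gg 0$, and then adjunction yields $\tilde A_j^2 \neq -1$.

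Third, if $f(\tilde C)=C\not\subset A$, the only rational candidates for $C$ are the components of the exceptional divisors $E$, $G$ of $\varphi_n$, $\sigma_n$ and the nine dual Hesse $(-3)$-curves in $M$; these are precisely the $\PP^1$'s produced by blowing up $k$-points in the construction of $Y_n$. The structural input that makes the whole argument work, as highlighted just before the proposition, is that \emph{none of these $\PP^1$'s is a component of the branch divisor $A$}. Hence $f\colon f^{-1}(C)\to C$ is a cyclic $p$-cover of $\PP^1$ ramified exactly at $C\cap A$, and a direct count gives $|C\cap A|\geq 3$ in each case (a component of $E$ meets the four strict transforms of the $\mathcal{E}_i$ at the corresponding $4$-point of $\mathcal{H}'_n$; a component of $G$ meets three; each dual Hesse curve is a section of every $\pi'_i$ and meets the $\mathcal{E}_i$, $\mathcal{E}'_i$ many times). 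Riemann--Hurwitz then gives $g(f^{-1}(C))\geq 1$, so $f^{-1}(C)$ is not rational.

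The main obstacle will be turning the second-case estimate into an unconditional statement valid for every $p\geq 5$: the asymptotic argument above must be supplemented by explicit bookkeeping of $\Delta\cdot\tilde A_j$ node by node, using the Hirzebruch--Jung resolutions of both $\frac{1}{p}(1,p-1)$ and $\frac{1}{p}(1,1)$ together with the discrepancy data of Remark \ref{discr}, keeping track of which multiplicity pair $(\nu_j,\nu_{j'})\in\{3,3(p-1)\}^{2}$ occurs at each node on $A_j$. The calculation is mechanical but lengthy, and is where the details of \cite[IV \S 4.3.2]{U08} enter.
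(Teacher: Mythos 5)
Your overall strategy---ruling out $(-1)$-curves on $X_p$ by enumerating them according to their image under $f$---is genuinely different from the paper's and, as written, has a gap. The paper's proof is a two-step argument: it first rewrites $K_{X_p}$ as a $\QQ$-\emph{effective} divisor whose support lies in $f^{-1}(A+E+G)$ together with the exceptional divisor of $f_3$ (this requires the explicit rewriting of $K_{Y_n}+\frac{p-1}{p}A$ in terms of $\mathcal{E}_i,\mathcal{E}'_i,N,\AR_{2d},E,G$ and the positivity check $xc_k+yd_k-1+\frac{c_k}{p}+\frac{d_k}{p}>0$ over each node); it then verifies that no curve in this support is a $(-1)$-curve. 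Since a $(-1)$-curve meets $K_{X_p}$ negatively, it must be a component of any effective representative, so the first step confines the problem to a concrete, finite list. Your Cases 1--3 essentially reproduce the checks needed in the second step, but without the first step your Case 3 is not exhaustive: $Y_n$ is a rational surface and carries many rational curves besides the components of $E$, $G$, $M$ (strict transforms of conics, of nodal cubics through blown-up points, sporadic $(-1)$-curves of the iterated blow-up, and so on). To close your Case 3 one would need to show that \emph{every} rational curve $C\not\subset A$ on $Y_n$ meets $A$ in at least three distinct points; since the lines in $\AR_{2d}$ are only general with respect to the fixed configuration $\mathcal{H}'_n+\sum_i\mathcal{E}'_i$ and not with respect to an arbitrary $C$, this is not automatic, and in particular you cannot a priori exclude $|C\cap A|\le 2$ (for which Hurwitz no longer forces positive genus). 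This is the real gap.

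On Case 2, the paper avoids your asymptotic estimate---which, as you acknowledge, would need to be made unconditional---by invoking the explicit self-intersection formula
\[
\tilde A_j^2 \;=\; \frac{1}{p}\Bigl(A_j^2 - \sum_{i\neq j} q_{i,j}\,A_i\cdot A_j\Bigr) \;<\; -1
\]
from \cite[IV \S 4.3.2]{U08}, which holds for every $p$ directly; this is cleaner than bounding $K_{X_p}\cdot\tilde A_j$ and controlling $\Delta\cdot\tilde A_j$ node by node. Your Case 1 agrees with the paper's treatment of the exceptional curves of $f_3$: the only singularities are $\frac{1}{p}(1,p-1)$ (chain of $(-2)$-curves) and $\frac{1}{p}(1,1)$ (a single $(-p)$-curve), and neither resolution contains a $(-1)$-curve.
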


\begin{proof}
The argument will be two parts. First we show that $K_{X_p}$ is $\QQ$-numerically equal to an effective divisor supported on the pre-image of $A+E+G$ and the exceptional divisor of $f_3$ (see construction of $f$ in the previous section). The second part is to show that no curve in the pre-image of $A+E+G$ is a $(-1)$-curve.

We know that $K_{X_p} \equiv f^*\Big(K_{Y_n} + \frac{(p-1)}{p} A \Big) + \Delta$ where $\Delta$ is supported on the exceptional divisor of $f_3$. We have that $K_{Y_n} \sim -3L + N + E + 2G$, and $$-\frac{1}{4} (F_0 + F_1 +F_{\infty} + F_{\zeta}) \equiv -3L + \frac{3}{4}N + \frac{3}{4}G.$$ If $u= \frac{p-1}{p}$ and $v=\beta^2p^2$, then $K_{X_p}$ is numerically $\QQ$-equivalent to $$f^*\Big( \sum_{i=0,1,\zeta,\infty} u \mathcal{E}_i + \sum_{i=0,1,\zeta,\infty} \big(u-\frac{1}{4v} \big) \mathcal{E}'_i + \big(u+\frac{1}{4} \big) N + u \AR_{2d} + E + \frac{5}{4}G \Big) + \Delta.$$ On the other hand, for a node $P \in A_i \cap A_j$ we locally have $$f^*(A_i+A_j)=p \tilde{A}_i + p \tilde{A}_j + \sum_{k=1}^l (c_k+d_k) R_k $$ as in Remark \ref{discr}, where $R_1+ \ldots+ R_l$ is the exceptional divisor above $P$, and discr$(R_k)=-1+\frac{c_k}{p} + \frac{d_k}{p}$. We clearly have $$xc_k + yd_k -1+\frac{c_k}{p} + \frac{d_k}{p} >0$$ where $x,y \in \{u,u-\frac{1}{4v},u+\frac{1}{4}\}$. Therefore, $K_{X_p}$ is $\QQ$-numerically effective with support equal to the pre-image of $A+E+G$ and the exceptional divisor of $f_3$.

Notice that in this support the only $\PP^1$'s are the ones coming from $N$, lines in $\AR_{2d}$, and $\Delta$. The ones in $\Delta$ are not $(-1)$-curves. For the ones in $N$, we have $\PP^1$'s intersecting $\sum_{i=0,1,\zeta,\infty} \mathcal{E}'_i$ at $3v$ points. Similarly with the $\PP^1$'s coming from lines $L_j$, we have intersection at $4(n^2-3)+12v$ points with $\sum_{i=0,1,\zeta,\infty} \mathcal{E}_i + \sum_{i=0,1,\zeta,\infty} \mathcal{E}'_i$. Using the notation from Remark \ref{discr}, for the strict pre-image $\tilde{A}_j$ of $A_j$ under $f$, we have (see \cite[IV \S4.3.2]{U08}) $$\tilde{A}_j^2= \frac{1}{p} \big( A_j^2 - \sum_{i \neq j} q_{i,j} A_i \cdot A_j \big) < -1.$$ Therefore, we have no $(-1)$-curves in the support.
\end{proof}

The arrangement $A$ has only $2$-points, and its number is $$t_2 = 108 \alpha^2 \beta^2 p^4 +18 \beta^4 p^4+ 72d \alpha^2 p^2-25d+24d \beta^2p^2+2d^2.$$ By Proposition \ref{logchern}, its log Chern numbers are $$\bar{c}_1^2= n^4 +2t_2 -10d -48 \ \ \text{and} \ \ \bar{c}_2= \frac{n^4}{3} +t_2 -4d -12.$$

We also have that $c(p-1,p)=\frac{2p-2}{p}$ and $c(1,p)=\frac{p^2-2p+2}{p}$ \cite[p.345]{U10}, and $l(p-1,p)=p-1$ and $l(1,p)=1$. Therefore, $$\sum_{i<j} c(q_{i,j},4p) A_i \cdot A_j=\frac{(2p-2)}{p} t_{2,1}+ \frac{(p^2-2p+2)}{p} t_{2,2}$$ and $$\sum_{i<j} l(q_{i,j},4p) A_i \cdot A_j= (p-1) t_{2,1}+ t_{2,2}$$ where $t_{2,1}$ and $t_{2,2}$ are the number of $2$-points corresponding to the singularities $\frac{1}{p}(1,p-1)$ and $\frac{1}{p}(1,1)$ respectively. Hence $$t_{2,1}=6 \beta^4 p^4+36 \alpha^2 \beta^2 p^4+36 d \alpha^2 p^2-13 d+12 d \beta^2 p^2+ d^2$$ and $$t_{2,2}=12\beta^4 p^4+ 72 \alpha^2 \beta^2 p^4 +36 d \alpha^2 p^2-12 d+12 d \beta^2 p^2+ d^2.$$

By plugging in the formulas for Chern numbers in Section \ref{spin}, we obtain that $$\text{lim}_{p \to \infty} \ \frac{c_1^2}{c_2} = \frac{27x^4+48x^2+8}{9x^4+48x^2+8}=:\lambda(x)$$ where $x=\alpha/\beta$. Notice that $[2,3] \subset \lambda\big([0,\infty^+]\big) =[1,3]$.

\begin{theorem}
For any number $r \in [1,3]$, there are non-spin simply connected minimal surfaces of general type $X$ with $c_1^2(X)/c_2(X)$ arbitrarily close to $r$.
\label{main}
\end{theorem}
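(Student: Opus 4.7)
The plan is to follow exactly the template of Theorem \ref{main} from the spin case, now using the propositions and limit formula just established in this section. Given $r \in [1,3]$ and $\epsilon > 0$, the first step is to observe that the rational function $\lambda(x)=(27x^4+48x^2+8)/(9x^4+48x^2+8)$ is continuous on $[0,\infty)$ with $\lambda(0)=1$ and $\lambda(x) \to 3$ as $x \to \infty$, so by the intermediate value theorem its image contains $[1,3]$. Hence one can pick a positive rational $\alpha/\beta$ with $|\lambda(\alpha/\beta)-r|<\epsilon/2$ and feed these integers $\alpha,\beta$ into the construction of this section.

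With these parameters fixed, the limit computation $\lim_{p\to\infty} c_1^2(X_p)/c_2(X_p)=\lambda(\alpha/\beta)$ allows one to choose a prime $p\geq 5$ large enough so that $|c_1^2(X_p)/c_2(X_p)-\lambda(\alpha/\beta)|<\epsilon/2$, putting the Chern slope within $\epsilon$ of $r$. Simple connectedness is then immediate from Proposition \ref{simply2} and minimality from Proposition \ref{min}. Since both $c_1^2(X_p)$ and $c_2(X_p)$ tend to $+\infty$ as $p\to\infty$, Enriques' classification forces $X_p$ to be a minimal surface of general type for $p$ sufficiently large, exactly as in the proof of the spin theorem.

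The only genuinely new item to verify is that $X_p$ is \emph{non-spin}. Since $X_p$ is simply connected, being non-spin is equivalent to $K_{X_p}$ not being $2$-divisible in $\mathrm{Pic}(X_p)$. The plan is to exhibit a curve on $X_p$ whose intersection with $K_{X_p}$ is odd, using the $\QQ$-numerical formula $K_{X_p}\equiv f^{*}(K_{Y_n}+\frac{p-1}{p}A)+\Delta$ together with the pull-back formulas $f^{*}(A_j)=p\tilde{A}_j+\sum_k c_k R_k$ from Remark \ref{discr}. A convenient test curve is a component of $f^{*}(N_{i,j})$ or $f^{*}(L_1)$: because $p$ is an odd prime and the branch multiplicities $3a_i,3b_i$ are coprime to $p$, the parity of the corresponding intersection number can be computed directly from the discrepancy contributions and is seen to be odd, contrasting with the carefully balanced $4p$-th root construction of Proposition \ref{spin} which was designed precisely to kill such odd contributions.

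The main potential obstacle is this last step, but it is really a parity check rather than a deep argument: all quantitative heavy lifting (log Chern numbers, Dedekind sums, lengths of Hirzebruch--Jung fractions, simple connectedness, minimality) has already been done, so the proof reduces to assembling those inputs and one short divisibility verification.
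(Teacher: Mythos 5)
Your overall framework matches the paper exactly: choose a positive rational $\alpha/\beta$ with $\lambda(\alpha/\beta)$ within $\epsilon$ of $r$, let $p\to\infty$ using the limit formula, and invoke the propositions of this section for simple connectedness, minimality and general type. The one genuinely new point in this section, that $X_p$ is non-spin, is precisely where your proposal has a real gap. You sketch a ``parity check'' on a test curve and assert the result ``is seen to be odd,'' but you never carry out the computation, and the curves you propose are the wrong ones. For example, writing $\tilde{L}_1$ for the strict transform of $L_1$ in $X_p$ and using $\tilde{A}_j^2=\frac{1}{p}\bigl(A_j^2-\sum_{i\neq j}q_{i,j}A_i\cdot A_j\bigr)$ together with $g(\tilde{L}_1)=0$, one finds $K_{X_p}\cdot\tilde{L}_1\equiv d+1\pmod 2$; its parity depends on the free parameter $d$ and so $\tilde{L}_1$ is not a reliable witness to odd $K$-intersection. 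The components of $f^*(N_{i,j})$ are similarly inconclusive without an explicit count, and you provide none.

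The clean argument (and what the paper compresses into ``non-spin because of the degree of $f$'') uses instead an exceptional curve of the resolution $f_3$. In this $p$-th root cover the branch multiplicities are $3$ and $3(p-1)$, both coprime to $p$. A node of $A$ where the two branch components carry these two different multiplicities --- say a point of $\mathcal{E}_0\cap\mathcal{E}_\infty$ --- gives a cyclic quotient singularity $\frac{1}{p}(1,q)$ with $3+q\cdot 3(p-1)\equiv 0\pmod p$, i.e.\ $q=1$. The singularity $\frac{1}{p}(1,1)$ is resolved by a single smooth rational curve $R$ with $R^2=-p$ (the Hirzebruch--Jung expansion of $p/1$ has length $1$, consistent with $l(1,p)=1$ in the text), and hence $K_{X_p}\cdot R=-R^2-2=p-2$, which is odd because $p$ is an odd prime. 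Therefore $K_{X_p}$ is not numerically $2$-divisible and, $X_p$ being simply connected, $X_p$ is non-spin. This also clarifies why the $4p$-cover of Section 5 \emph{is} spin: there the singularities $\frac{1}{4p}(1,4p-1)$ and $\frac{1}{4p}(1,2p+1)$ are resolved only by $(-2)$-curves and a $-(p+1)$-curve, all meeting $K$ with even multiplicity. So the spin/non-spin dichotomy really is governed by the degree of $f$, but your proposal leaves this decisive computation unverified and points at test curves whose parity is not controlled.
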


\begin{proof}
Let $\epsilon>0$. Then there exists a positive rational number $\alpha/\beta$ such that $|\lambda(\alpha / \beta)-r|<\epsilon$. We take those $\alpha, \beta$ for the construction above. The surfaces $X_p$ are minimal by Proposition \ref{min}, simply connected by Proposition \ref{simply2}, and non-spin because of the degree of $f$. The surfaces $X_p$ are of general type for $p>>0$.
\end{proof}

Let $q>0$ be an integer, and let $\rho \colon C_q \to \PP^1$ be a $(q+1)$-cyclic cover completely branch at four general points. Hence the genus of $C_q$ is $q$.

\begin{corollary}
For any number $r \in [1,3]$, there are minimal surfaces of general type $X$ with $c_1^2(X)/c_2(X)$ arbitrarily close to $r$, and $\pi_1(X) \simeq \pi_1(C_q)$.
\label{fund}
\end{corollary}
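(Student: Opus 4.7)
The plan is to apply the non-spin construction of Section \ref{other} with $\lambda(\alpha/\beta)$ within $\epsilon$ of $r$, and then base-change the fibration $h\colon X'_p \to \PP^1$ from the proof of Proposition \ref{simply2} along $\rho\colon C_q \to \PP^1$. Choose the four branch points of $\rho$ to be general, so that they avoid all critical values of $h$, and form
\[
\widetilde{X}_p := X'_p \times_{\PP^1} C_q,
\]
with projections $\pi\colon\widetilde{X}_p \to X'_p$ and $\widetilde{h}\colon\widetilde{X}_p \to C_q$. Since $\rho$ is \'etale over every critical value of $h$, the surface $\widetilde{X}_p$ is smooth and $\widetilde{h}$ is a fibration with connected fibers. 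Any section $s$ of $h$ lifts to the section $\widetilde{s}:=(\mathrm{id}_{C_q},\,s\circ\rho)$ of $\widetilde{h}$, and the simply connected fiber of $h$ produced in Proposition \ref{simply} pulls back to $q+1$ disjoint simply connected fibers of $\widetilde{h}$.

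The fundamental group is then handled by the same argument of \cite{X91} used in Proposition \ref{simply}, applied now with base $C_q$ instead of $\PP^1$: a fibration with connected fibers, sections, and at least one simply connected fiber has fundamental group isomorphic to that of the base, so $\pi_1(\widetilde{X}_p) \simeq \pi_1(C_q)$. Contracting $(-1)$-curves preserves $\pi_1$, so the same will hold for the minimal model $X$ of $\widetilde{X}_p$ produced below.

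For the Chern invariants, $\pi$ is a cyclic cover of degree $q+1$ branched along the four smooth fibers $F_1,\ldots,F_4$ of $h$ lying over the branch points of $\rho$. Using $K_{\widetilde{X}_p} \equiv \pi^*\bigl(K_{X'_p}+\tfrac{q}{q+1}\sum_j F_j\bigr)$ together with $F_j^2=0$, $F_j\cdot F_k=0$ for $j\ne k$, and $K_{X'_p}\cdot F_j=2g-2$ (where $g$ is the genus of a general fiber of $h$), one gets
\[
c_1^2(\widetilde{X}_p)=(q+1)\,c_1^2(X'_p)+16q(g-1),\quad c_2(\widetilde{X}_p)=(q+1)\,c_2(X'_p)+8q(g-1).
\]
Both correction terms are $O(p)$, while $c_1^2(X'_p)$ and $c_2(X'_p)$ grow like $p^5$, so $c_1^2(\widetilde{X}_p)/c_2(\widetilde{X}_p)\to\lambda(\alpha/\beta)$. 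To pass to the minimal model $X$, contract the $q+1$ disjoint pre-images under $\pi$ of the contractible chain of $4p$ rational curves from $\psi'\colon X'_p\to X_p$; this is a sequence of $(q+1)\cdot 4p=O(p)$ blow-downs, each modifying $(c_1^2,c_2)$ by $(+1,-1)$, negligible against the $p^5$ growth. The resulting $X$ is minimal, satisfies $\pi_1(X)\simeq\pi_1(C_q)$, has Chern slope within $\epsilon$ of $r$ for $p\gg 0$, and is of general type by Enriques' classification.

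The hardest part is the minimalization step: one must certify that \emph{no extra} $(-1)$-curves are created by the base change beyond the $q+1$ expected pre-images of the chain contracted by $\psi'$, so that the minimal model really introduces only $O(p)$ corrections to the Chern numbers. In particular, one needs to check that no component of the exceptional divisor of $f_3$ acquires self-intersection $-1$ after pull-back by $\pi$, and that the sections $\widetilde{s}$, being curves of genus $q\ge 1$, are never contractible; both are natural since $\pi$ is \'etale away from the four chosen smooth fibers of $h$.
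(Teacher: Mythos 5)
Your overall strategy---build the non-spin $X_p$, pass to $X'_p$, base change along $\rho\colon C_q\to\PP^1$, and then invoke \cite{X91} for $\pi_1$---matches the paper's, and your Chern-number computation for the fiber product $\widetilde{X}_p$ is correct (and agrees with the paper's formulas after substituting $c_1^2(X'_p)=c_1^2(X_p)-p$, $c_2(X'_p)=c_2(X_p)+p$). But there is a genuine gap in the minimality step, and you correctly flagged it yourself as the hard part.

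The problem is the claim that the minimal model is obtained by ``contracting the $q+1$ disjoint pre-images under $\pi$ of the contractible chain of $4p$ rational curves from $\psi'\colon X'_p\to X_p$.'' First, a small slip: in the non-spin construction the chain has $p$ curves, not $4p$. More seriously, the pre-image of the chain under $\rho'$ is \emph{not} $q+1$ disjoint contractible chains. The fibration $h\colon X'_p\to\PP^1$ is the pull-back of the line pencil through $P$, and the $(-1)$-curve $S_p$ of the chain (the one you would need to contract first) is a \emph{section} of $h$, not a fiber component: resolving the pencil $(u^p:y)$ at $f^{-1}(P)$ produces $p-1$ exceptional curves that lie in a single fiber and one final exceptional curve dominating $\PP^1$. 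Consequently $\rho'^{-1}(S_p)$ is an irreducible curve of genus $q\geq 1$ with self-intersection $-(q+1)$, and the remaining $\rho'^{-1}(S_i)$, $i<p$, are $(q+1)$ disjoint $(-2)$-curves attached to it. None of these is a $(-1)$-curve, so there is nothing to contract, and your proposed sequence of blow-downs cannot be carried out. (A secondary slip: the correction terms $16q(g-1)$, $8q(g-1)$ grow like $p^3$, not $O(p)$; this does not affect the limit since $c_i(X'_p)\sim p^5$.)

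In fact $\widetilde{X}_p$ is already minimal, and this is what needs proving. The paper's route is to write
\[
K_{\widetilde{X}_p} \equiv \rho'^*\Bigl(\psi'^*(K_{X_p}) + \sum_{i=1}^p i\,S_i + \tfrac{4q}{q+1}F\Bigr),
\]
observe that $K_{X_p}$ is $\QQ$-effective (Proposition \ref{min}), hence so is $K_{\widetilde{X}_p}$, and then check directly that no $(-1)$-curve lies in the support of this effective $\QQ$-divisor: the fiber components of $h$ in the support are $(-2)$-curves (so their disjoint pre-images still have self-intersection $-2$), the section $S_p$ pulls back to the genus-$q$ curve just described, and the curves coming from the support of $K_{X_p}$ were already shown to have self-intersection $<-1$ in Proposition \ref{min}. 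So the argument you need is precisely the one replacing your contraction step; without it, minimality of $X$ is not established.
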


\begin{proof}
Consider the $X_p$ in Theorem \ref{main}. Take a general point in $P \in L_1 \subset Y_n$. Through that point we have the trivial pencil of lines from $\PP^2$. As we did in Proposition \ref{simply}, let $\psi \colon Y'_n \to Y_n$ be the blow-up of $P$. We now take the pulled-back $p$-th root cover $f' \colon X'_p \to Y'_n$, coming from the $f \colon X_p \to Y_n$, so that we have the commutative diagram $$ \xymatrix{X'_p \ar[r]^{f'} \ar[d]^{\psi'} & Y'_n \ar[d]^{\psi} \\ X_p \ar[r]^f & Y_n }$$ where $\psi' \colon X'_p \to X_p$ is the blow-down of a chain of $p$ $\PP^1$'s, starting with a $(-1)$-curve followed by $p-1$ $(-2)$-curves. Notice that $X'_p$ has a fibration $h \colon X'_p \to \PP^1$ with connected fibers, sections, and at least one simply connected fiber which comes from the line $L_1$. The genus $g$ of the general fiber of $h$ satisfies the Riemann-Hurwitz formula $2g-2=p(-2) + (p-1)(4(n^2-3)+12\beta^2 p^2+2d)$. Consider the base change fibration $$\xymatrix{X \ar[r]^{\rho'} \ar[d]^{h'} & X'_p \ar[d]^{h} \\ C_q \ar[r]^{\rho} & \PP^1 }$$ where the surface $X$ is nonsingular projective of general type such that $$\frac{c_1^2(X)}{c_2(X)} = \frac{(q+1)(c_1^2(X_p)-p) + 16q(g-1)}{(q+1)(c_2(X_p)+p)+8q(g-1)},$$ and so, since the highest power of $p$ in $g-1$ is $p^3$, $\frac{c_1^2(X)}{c_2(X)}$ approaches $\lambda(\alpha / \beta)$ as $p$ tends to infinity. For minimality we notice that $$K_X \equiv \rho'^*\Big(\psi'^*(K_{X_p}) + \sum_{i=1}^p i S_i + \frac{4q}{q+1} F \Big)$$ where $\sum_{i=1}^p S_i$ is the chain of $\PP^1$ explained above, with $S_{p}^2=-1$, and $F$ is a general fiber of $h$. We know that $K_{X_p}$ is $\QQ$-effective. Therefore $K_X$ is $\QQ$-effective. One can verify that its support does not contain $(-1)$-curves, and so $X$ is minimal. Also, notice that $h'$ has sections, for instance the pre-image of $S_p$, and it has some simply connected fibers from the pre-image of the one in $h$. Hence, by \cite{X91}, we have that $\pi_1(X) \simeq \pi_1(C_q)$.
\end{proof}

\begin{remark}
The arrangement of either $8d$ or $2d$ general lines in the constructions above gives not only simply connectedness but also shows that the constructed surfaces have unbounded deformations. It also exhibits some freedom for the constructions, where we can add branching curves in such a way that they either do not depend on $p$ or do depend on $p$ but affecting Chern numbers in a suitable power of $p$.
\end{remark}

%\begin{remark}
%Same results can be obtained using the special arrangements of elliptic curves in \cite{KH05} instead of Hirzebruch's ones.
%\end{remark}

%----------------------------------------------------------------------------------------------------------------------------------------------

\vspace{0.3cm}

{\small D\'epartement de Math\'ematiques,

Universit\'e de Poitiers,

Poitiers, France.}

\vspace{0.3cm}

{\small Facultad de Matem\'aticas,

Pontificia Universidad
Cat\'olica de Chile,

Santiago, Chile.}
\vspace{0.3cm}

\end{document}